\def\B{{\mathcal B}}
\def\K{{\mathcal K}}
\def\H{{\mathcal H}}
\def\n{{\mathcal N}}
\def\r{{\mathcal R}}
\def\N{{\mathbb N}}
\def\m{{\mathbb M}}
\def\C{{\mathbb C}}
\def\R{{\mathbb R}}
\def\d{{\mathcal D}}
\def\M{{\mathcal M}}
\def\e{{\mathcal E}}
\def\f{{\mathcal F}}
\newcommand{\Span}{\mathsf{Span}~ }
\newcommand{\Ran}{\mathsf{Ran}~ }
\newcommand{\dist}{\mathsf{dist} }
\numberwithin{equation}{section}
\theoremstyle{plain}
\newtheorem{theorem}{Theorem}[section]
\newtheorem{proposition}[theorem]{Proposition}
\newtheorem{lemma}[theorem]{Lemma}
\newtheorem{definition}  [theorem] {Definition}
\theoremstyle{definition}
\newtheorem{ex}[theorem] {Example}
\begin{document}

\title{Alternating projections on non-tangential manifolds.}
\author{Fredrik Andersson}
\address{Centre for Mathematical Sciences, Lund University, Sweden}
\email{fa@maths.lth.se}

\author{Marcus Carlsson}
\address{Departamento de Matem\'{a}ticas,Universidad de Santiago de Chile, Chile}
\email{marcus.carlsson@usach.cl}

\subjclass[2010]{41A65, 49Q99, 53B25}\keywords{Alternating projections, convergence, Non-convexity}

\date{}

\dedicatory{}

\begin{abstract}
We consider sequences
$(B_k)_{k=0}^\infty$ of points obtained by projecting back and forth
between two manifolds $\M_1$ and $\M_2$, and give conditions
guaranteeing that the sequence converge to a limit
$B_\infty\in\M_1\cap\M_2$. Our motivation is the study of algorithms
based on finding the limit of such sequences, which have proven
useful in a number of areas. The intersection is
typically a set with desirable properties, but for which there is no
efficient method of finding the closest point $B_{opt}$ in
$\M_1\cap\M_2$. We prove not only that the sequence of alternating
projections converges, but that the limit point is fairly close to
$B_{opt}$, in a manner relative to the distance $\|B_0-B_{opt}\|$,
thereby significantly improving earlier results in the field. A
concrete example with applications to frequency estimation of
signals is also presented.
\end{abstract}

\maketitle

\section{Introduction}
Let $\K$ be a finite dimensional Hilbert space over $\R$ and let
$\M_1, ~\M_2\subset\K$ be manifolds. Suppose that for any $B\in\K$
the closest point on $\M_j$, $j=1,2$, is well defined and lets
denote it by $\pi_j(B)$. Let the corresponding projection onto the
intersection $\M_1\cap\M_2$ be denoted by $\pi(B)$. Suppose that we
are interested in finding the closest point on $\M_1\cap\M_2$ and
that the ``projection operators'' $\pi_1$ and $\pi_2$ can be
efficiently computed, whereas $\pi$ can not. The issue treated here
is how to use $\pi_1$ and $\pi_2$ to obtain an approximation of
$\pi$. A classical result by von Neumann \cite{vonNeumann} says that if $\M_1$ and
$\M_2$ are affine linear manifolds, then the sequence of alternating
projections
\begin{equation}\label{a1}\pi_1(B),~\pi_2(\pi_1(B)),~\pi_1(\pi_2(\pi_1(B))),~\pi_2(\pi_1(\pi_2(\pi_1(B)))),\ldots\end{equation}
converges to $\pi(B)$.  Moreover, the convergence rate is
determined by the angle between $\M_1$ and $\M_2$. This paper is
concerned with extensions of this result to non-linear manifolds.

Hence, given $B\in\K$, let $B_1=\pi_1(B)$ and
\begin{equation}\label{eq44}B_{k+1}=\left\{\begin{array}{cc}
                                    \pi_1(B_k) & k, \text{ is even,} \\
                                    \pi_2(B_k) & k, \text{ is odd.}
                                  \end{array}\right.
\end{equation} In contrast to the case where $\M_j$ is an affine linear manifold, $B_\infty \ne \pi(B)$. However, given that $\M_j$ behave nicely, we may expect $\B_\infty \approx \pi(B)$.

Alternating projection schemes of this kind have been used in a
number of applications, cf. \cite{cadzow, Grigoriadis, Levi:83,Liu_altproj_hankel,Lu_altproj_hankel,  Marks, structured_low_rank_survey, Prabhu_altproj_hankel}. For
instance, $\K$ can be the set $\m_{m,n}$ of $m\times n$-matrices,
and the manifolds $\M_j$ are subsets with a certain structure, e.g.
matrices with a certain rank, self-adjoint matrices, Hankel or
Toeplitz matrices etc. Alternating projection schemes between
several linear subsets in the infinite dimensional setting was
recently investigated in \cite{bad}. Much emphasis has been put towards the usage of alternating projections for the case of complex manifolds, see for instance \cite{Bauschke93, Bauschke96}. Connections to the EM algorithm are given in \cite{Bauschke99}.


In \cite{cadzow}, Zangwill's Global Convergence Theorem
\cite{zangwill} is used to motivate the convergence of the
alternating projection scheme above. From Zangwill's theorem it is
possible to deduce that if the sequence $(B_k)_{k=1}^\infty$ is
bounded and the distance to $\M_1\cap\M_2$ is strictly decreasing,
then $(B_k)_{k=1}^\infty$ has a convergent subsequence to a point
$B_{\infty}\in\M_1\cap\M_2$, i.e., a result related to the fact that
any bounded sequence in a compact set has a convergent subsequence.
Thus, the use of Zangwill's theorem in this context does not provide
any information about whether the limit point $B_{\infty}$ exists,
or if so, whether it is close to $\pi(B)$.

Recently, A. Lewis and J. Malick presented stronger results, valid
under more restrictive conditions on $\M_1$ and $\M_2$. Before
discussing their results in more detail, we give two simple examples
that illustrate some of the difficulties that may arise when using
alternating projections on non-linear manifolds.

\begin{ex}\label{ex1}
Let $\K=\R^2$ and set $\M_1=\{(t,(t+1)(3-t)/4):t\in\R\}$ and
$\M_2=\R\times\{0\}$. It is easily seen that
$\pi_1((1,0))=(1,1)$ and $\pi_2((1,1))=(1,0)$, and hence the
sequence of alternating projections does not converge, cf. Figure
\ref{fig1}. On the other hand, if
$(1+\epsilon,0)\in \M_2$, $\epsilon>0$, is used as a starting point, the sequence of alternating projections will converge to $(3,0)\in \M_1\cap\M_2$.
\end{ex}
\begin{figure}[tbh!]
\unitlength=1in
\begin{center}
\psfrag{10}[][l]{$(1,0)$}
\psfrag{11}[][l]{$(1,1)$}
\psfrag{30}[][l]{$(3,0)$}
\includegraphics[width=3.5in]{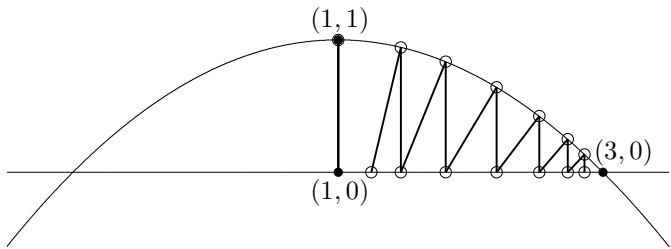}
\end{center}
\caption{\label{fig1} An example demonstrating that the algorithm can get stuck
in loops where even no subsequence converges to a point in the
intersection.}
\end{figure}

In general, it seems reasonable to assume that if the starting point is sufficiently close to an intersection point, then the sequence
does converge to a point in the intersection. The next example shows
that this is not the case of limited smoothness.

\begin{ex}\label{ex2}
Without going in to the details of the construction, we note that
one can construct a $C^1$-function $f$ such that, with
$\M_1=\R\times\{0\}$ and $\M_2=\{(t,f(t)):t\in\R\}$, the sequence of
alternating projections can get stuck in projecting back and forth
between the same two points. Figure \ref{fig11} explains the idea.
\end{ex}
\begin{figure}[tbh!]
\unitlength=1in
\begin{center}
\psfrag{10}[][l]{$(1,0)$}
\psfrag{11}[][l]{$(1,1)$}
\psfrag{30}[][l]{$(3,0)$}
\includegraphics[width=3.5in]{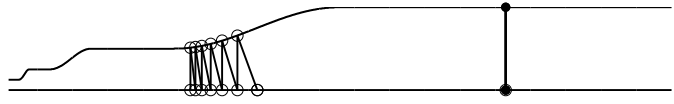}
\end{center}
\caption{\label{fig11}Alternating projections stuck in a loop.}
\end{figure}

However, if we have $f\in C^1$ and $f'(0)\neq 0$, it is hard to
imagine how to make a similar construction work. This is indeed
impossible if we assume additional smoothness, which follows both
from the present paper and \cite{lewis_malick}. In the terminology
of the latter, the condition $f'(0)\neq 0$ implies that $\M_1$ and
$\M_2$ are \textit{transversal} at $(0,0)$. In general, given
$C^1$-manifolds $\M_1,~\M_2$ and a point $A\in\M_1\cap\M_2$, we say
that $A$ is \textit{transversal} if
\begin{equation}\label{transversal}T_{\M_1}(A)+T_{\M_2}(A)=\K,\end{equation}
where $T_{\M_j}(A)$ denotes the tangent-space of $\M_j$ at $A$,
$j=1,2.$ The main result in \cite{lewis_malick} is roughly the
following:
\begin{theorem}\label{tLM}
Let $\M_1$ and $\M_2$ be $C^3$-manifolds and let $A\in\M_1\cap\M_2$
be transversal. If $B$ is close enough to $A$, then the sequence of
alternating projections $(B_k)_{k=1}^\infty$ given by (\ref{eq44})
converges to a point $B_\infty$ in $\M_1\cap\M_2$. Moreover,
$$\|B_{\infty}-\pi(B)\|\leq 2\|A-B\|.$$
\end{theorem}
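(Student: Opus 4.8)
The plan is to reduce the nonlinear iteration to its linearization at $A$, identify that linearization with the classical von Neumann operator, and then control the nonlinear corrections using smoothness and transversality. Throughout I take $A=0$ without loss of generality. The first ingredient I would record is the analytic fact that the metric projection onto a $C^3$-manifold is $C^2$ on a tubular neighborhood; this is where the smoothness hypothesis is spent, since it lets me Taylor-expand $\pi_1$ and $\pi_2$ to second order with a remainder that is uniformly controlled near $A$. Because $A\in\M_1\cap\M_2$ is fixed by both projections, the chain rule gives $D\pi_j(A)=P_{T_{\M_j}(A)}$, the orthogonal projection onto the tangent space (the derivative of a nearest-point projection at a point of the manifold is exactly the orthogonal projection onto its tangent plane). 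Hence the two-step map $F=\pi_2\circ\pi_1$ satisfies $F(A)=A$ and $DF(A)=P_{T_{\M_2}(A)}P_{T_{\M_1}(A)}=:L$, which is precisely the linear alternating-projection operator analysed by von Neumann.

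Next I would bring in transversality. Set $V=T_{\M_1}(A)\cap T_{\M_2}(A)$. The hypothesis (\ref{transversal}) guarantees two things I would establish here: first, $\M_1\cap\M_2$ is itself a $C^3$-manifold near $A$ with tangent space exactly $V$, so that the curved intersection is faithfully modelled, to first order, by the flat subspace $V$; second, the Friedrichs angle between the two tangent spaces is positive. Using $V\subseteq T_{\M_j}(A)$ one checks $LP_V=P_VL=P_V$ and $(I-P_V)L=(I-P_V)L(I-P_V)$, so that $L$ fixes $V$ pointwise while contracting the complementary directions geometrically: $L^n\to P_V$ with some rate $c<1$ on $\mathrm{Ran}(I-P_V)$ governed by the Friedrichs angle. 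This spectral gap $1-c$ is the engine of convergence.

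The core estimate I would then prove is that, on a sufficiently small neighborhood of $A$, one application of $F$ decreases the distance to the intersection geometrically while displacing the point only at a rate proportional to that distance. Concretely, writing $g(B)=\dist(B,\M_1\cap\M_2)$, I would establish $g(F(B))\le c'\,g(B)$ for a fixed $c'\in(c,1)$ together with $\|F(B)-B\|\le Cg(B)$: the linearization supplies the leading-order contraction $c$, and the uniform $C^2$-Taylor remainder absorbs the curvature of $\M_1\cap\M_2$ into the slightly larger constant $c'$. Iterating the two-step map (and noting the odd steps are controlled the same way) yields $g(B_k)\le (c')^k g(B_0)$, whence $\sum_k\|B_{k+1}-B_k\|\le C\sum_k (c')^k g(B_0)<\infty$. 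Thus $(B_k)$ is Cauchy and converges to some $B_\infty$, which is a fixed point of both $\pi_1$ and $\pi_2$ by continuity, i.e. $B_\infty\in\M_1\cap\M_2$.

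Finally, for the estimate $\|B_\infty-\pi(B)\|\le 2\|A-B\|$ I would invoke the triangle inequality through the two points $B$ and the two manifold points available. On one hand $\|\pi(B)-B\|=\dist(B,\M_1\cap\M_2)\le\|A-B\|$ since $A\in\M_1\cap\M_2$; on the other hand $\|B_\infty-B\|\le\|B_1-B\|+\sum_{k\ge1}\|B_{k+1}-B_k\|$, where $\|B_1-B\|=\dist(B,\M_1)\le\|A-B\|$ and the remaining sum is a geometric tail made negligible by taking $B$ close enough to $A$; combining these gives the factor $2$. The main obstacle is the core estimate of the preceding paragraph: reconciling the flat linear picture with the genuinely curved intersection manifold, and proving that the distance to the intersection truly contracts, requires the uniform control of the second-order remainders in tandem with the positive-angle bound. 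It is exactly here that all three hypotheses — the $C^3$ regularity, transversality, and the proximity of $B$ to $A$ — are simultaneously and essentially used.
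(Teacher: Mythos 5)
First, a point of reference: the paper does not prove Theorem \ref{tLM} at all --- it is quoted from \cite{lewis_malick} as background, and the paper's own work goes into the stronger Theorem \ref{main} (transversality implies non-tangentiality by Proposition 4.5, so Theorem \ref{main} essentially subsumes this statement). Judged against that machinery, your proposal has the right skeleton (linearize the projections, use the angle between tangent spaces as the contraction rate, sum a geometric series), but the step you yourself identify as ``the main obstacle'' is genuinely not closed, and the mechanism you propose for closing it cannot work as stated. You Taylor-expand $F=\pi_2\circ\pi_1$ at the \emph{fixed} point $A$, so your remainder is $O(\|B-A\|^2)$. To conclude $g(F(B))\le c'\,g(B)$ with $c'<1$ you must absorb that remainder into $(c'-c)\,g(B)$, and this fails whenever $g(B)\ll\|B-A\|^2$, i.e.\ whenever the iterate has drifted along the intersection manifold away from $A$ while getting close to it --- which is exactly what the iterates do. The fix (and the heart of both \cite{lewis_malick} and this paper) is to linearize at the \emph{moving} foot point $\pi(B_k)$ rather than at $A$: this is the role of the maps $\rho_j(B)=P_{\tilde T_{\M_j}(\pi(B))}(B)$, of Proposition \ref{p4} (which bounds $\|\pi_j(B)-\rho_j(B)\|$ by a small multiple of $\|B-\pi(B)\|$, not of $\|B-A\|^2$), and of Proposition \ref{p3} (continuity of the angle $\sigma$, needed because the contraction rate now lives at the varying base point $\pi(B_k)$). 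None of these ingredients appear in your outline, and without them the claimed contraction of $g$ is an assertion, not a proof.

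A secondary but real problem is the final bound. Your tail $\sum_{k\ge1}\|B_{k+1}-B_k\|\le C\sum_k (c')^k g(B_0)$ is of order $\frac{C}{1-c'}\,g(B)$, and since $g(B)$ can be comparable to $\|A-B\|$, this term scales \emph{linearly} in $\|A-B\|$ with a constant that need not be small; it is not ``negligible by taking $B$ close enough to $A$.'' Your accounting therefore yields $\|B_\infty-\pi(B)\|\le C'\|A-B\|$ for some unspecified $C'$, not the stated factor $2$ (let alone the paper's sharper $\epsilon\|B-\pi(B)\|$). To control the limit's distance to $\pi(B)$ one needs a separate estimate on the drift of the shadow sequence, $\|\pi(B_{k+1})-\pi(B_k)\|\le\epsilon_6\|B_k-\pi(B_k)\|$ with $\epsilon_6$ arbitrarily small --- this is part (ii) of Theorem \ref{t1} and is again obtained from the $\rho_j$ comparison, not from Taylor expansion at $A$.
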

The improvement over Zangwill's theorem is thus that the entire
sequence converges, and that the assumption of boundedness no longer
is necessary. Moreover, the limit $B_\infty$ is not too far off from
$\pi(B)$, (although in relative terms, i.e. comparing with
$\dist(B,\M_1\cap\M_2)=\|B-\pi(B)\|$, it need not be particularly
close either).

On the other hand, the assumption of transversalsality is rather restrictive. To demonstrate the essence of the transversality
assumption, we now present two cases which are not covered by the
above result, but where the conclusion still holds.

\begin{figure}[tbh!]
\unitlength=1in
\begin{center}
\psfrag{00}[][l]{$(0,0)$} \psfrag{11}[][l]{$\hspace{10pt}(1,1)$}
\includegraphics[width=2.5in]{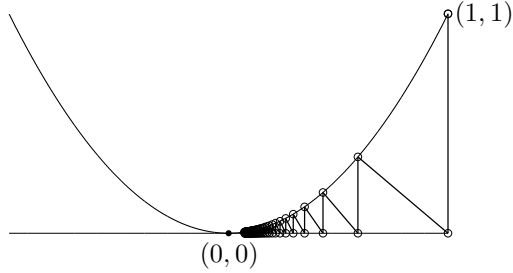}
\end{center}
\caption{\label{fig3} Lack of transversality due to tangential curves.}
\end{figure}

\begin{ex}\label{ex3}
With $\M_1=\R\times\{0\}$ and $\M_2=\{(t,t^2):t\in\R\}$,
transversality is not satisfied at $A=(0,0)$ (since
$T_{\M_1}(A)=T_{\M_2}(A)=\M_1$), but it is not hard to see that the
sequence of alternating projections still converges to $(0,0)$, (see
Figure \ref{fig3}).
\end{ex}

\begin{figure}[tbh!]
\unitlength=1in
\begin{center}
\psfrag{M1}[][l]{$\M_1$} \psfrag{M2}[][l]{$\M_2$}
\includegraphics[width=3.5in]{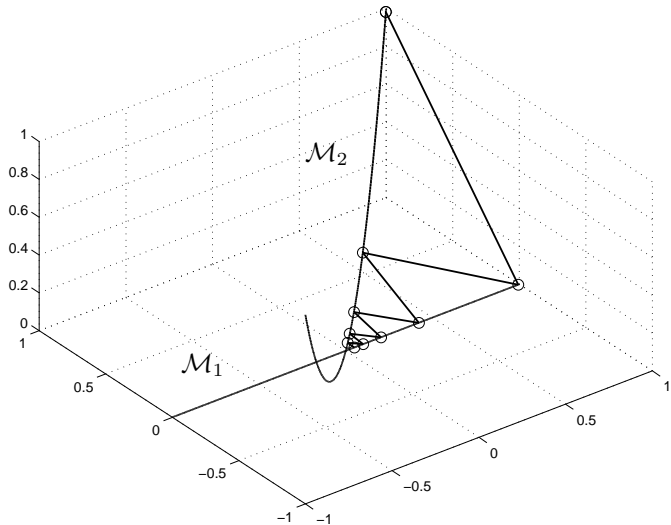}
\end{center}
\caption{\label{fig4}Lack of transversality due to too low
dimensionality.}
\end{figure}

The difference between Example \ref{ex2} and \ref{ex3} is that in
the latter case the manifolds are more regular.

\begin{ex}\label{ex4}
With $\K=\R^3$ and $\M_1=\R\times\{0\}^2$ and
$\M_2=\{(t,t,t^2):t\in\R\}$, transversality is not satisfied at
$A=(0,0)$, but again it seems plausible that the sequence of
alternating projections converges to $(0,0)$. See Figure \ref{fig4}.
\end{ex}
In Example \ref{ex4}, the two manifolds clearly sit at a positive
angle, but this situation is not covered by Theorem \ref{tLM}, since
the manifolds are of too low dimension to satisfy the transversality
assumption. In fact, if $\K$ has dimension $n$ and $\M_j$ has
dimension $m_j$, $j=1,2$, the transversality (\ref{transversal}) can
never be satisfied if $m_1+m_2<n$. But for many applications of
practical interest, one has $m_1+m_2<<n$. We will introduce a
concept which we call \textit{non-tangential}, which loosely
speaking says that the manifolds should have a positive angle in
directions perpendicular to $\M_1\cap\M_2$. The point $(0,0)$ in
Example \ref{ex4} is non-tangential, whereas the same point in
Example \ref{ex3} is not. A proper definition of non-tangentiality
is given in Definition \ref{def st}. A simplified version of our
main result is given in what follows.

\begin{theorem}\label{t2} Given a non-tangential point $A\in
\M_1\cap\M_2$ there exists an $s>0$ such that the sequence of
alternating projections (\ref{eq44}) converges to a point
$B_\infty\in\M_1\cap\M_2$, given that $\|B-A\|<s$. Moreover, given
any $\epsilon>0$ one can take $s$ such that
$$\|B_\infty-\pi(B)\|<\epsilon\|B-\pi(B)\|.$$
\end{theorem}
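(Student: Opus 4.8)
The plan is to realize the nonlinear iteration as a perturbation of von~Neumann's linear scheme on the tangent spaces, and to extract a uniform contraction from non-tangentiality. Throughout, write $\M_0=\M_1\cap\M_2$, let $T_j=T_{\M_j}(A)$ for $j=1,2$, and set $T_0=T_{\M_1}(A)\cap T_{\M_2}(A)$ with orthogonal complement $N_0=T_0^{\perp}$. Since $\M_0\subseteq\M_j$ we always have $T_{\M_0}(A)\subseteq T_0$; the content of non-tangentiality (Definition \ref{def st}) is, loosely, that $\M_0$ is locally a manifold with $T_{\M_0}(A)=T_0$---as opposed to Example \ref{ex3}, where $T_0=T_1=T_2$ strictly contains the tangent space $\{0\}$ of the zero-dimensional intersection. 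Given this, the Friedrichs angle $\theta$ between $T_1$ and $T_2$ relative to $T_0$ is automatically positive in finite dimensions, because $(T_1\ominus T_0)\cap(T_2\ominus T_0)\subseteq T_0\cap T_0^{\perp}=\{0\}$; hence $c:=\cos\theta<1$. First I would shrink to a neighborhood $U$ of $A$ on which $\pi_1,\pi_2$ are well defined and $C^1$, on which $\M_0$ is a submanifold, and on which all relevant curvatures are bounded. On $U$ the expansion $\pi_j(B)-A=P_{T_j}(B-A)+R_j(B)$ holds, where $P_{T_j}$ is orthogonal projection onto $T_j$ and $\|R_j(B)\|\le C\|B-A\|^2$, since $\pi_j(A)=A$ and $D\pi_j(A)=P_{T_j}$.

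The heart of the argument is a contraction estimate. Decomposing $B_k-A$ into its $T_0$- and $N_0$-components, the linearized cycle $P_{T_1}P_{T_2}$ fixes the $T_0$-component and, because $\theta>0$, contracts the $N_0$-component by a factor $c<1$; this is exactly von~Neumann's theorem for the subspaces $T_1,T_2$. I would then show that for $s$ small the quadratic remainders $R_j$ are dominated by this linear contraction, yielding a rate $\rho\in(c,1)$ with $\dist(B_{k+2},\M_0)\le\rho\,\dist(B_k,\M_0)$ and step sizes $\|B_{k+1}-B_k\|\lesssim\dist(B_k,\M_0)$ that decay geometrically. A trapping estimate is needed in tandem: since the steps are summable with total length $O(s)$, the orbit never leaves $U$ when $s$ is small, so the estimates remain valid along the entire sequence. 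Geometric decay of the steps makes $(B_k)$ Cauchy, hence convergent to some $B_\infty$; as $\M_1,\M_2$ are closed, the even and odd subsequences force $B_\infty\in\M_1\cap\M_2$.

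For the quality estimate I would compare the nonlinear orbit with the purely linear one. Von~Neumann's theorem gives that the linear iterates converge to $A+P_{T_0}(B-A)$, the orthogonal projection of $B$ onto the affine intersection $A+T_0$, which is the \emph{nearest} point there; thus in the flat model the limit coincides with the metric projection exactly. Consequently $\|B_\infty-\pi(B)\|$ is governed entirely by accumulated curvature corrections. The decisive observation is that if $B\in\M_0$ then every iterate equals $B$ and $B_\infty=B=\pi(B)$, so the deviation vanishes when $\dist(B,\M_0)=0$; by smoothness the deviation is therefore at worst linear in the normal displacement, of the form $(Cs+o(1))\,\dist(B,\M_0)$. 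Since $\dist(B,\M_0)=\|B-\pi(B)\|$, choosing $s<\epsilon/C$ (and small enough for the convergence step) yields $\|B_\infty-\pi(B)\|<\epsilon\|B-\pi(B)\|$.

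The step I expect to be the main obstacle is the uniform nonlinear contraction together with the trapping in $U$: one must show that the linear rate $c<1$ supplied by non-tangentiality survives the quadratic perturbation uniformly along the entire, a priori unbounded, sequence, while simultaneously ruling out that the orbit drifts out of the region where these estimates hold. Carefully balancing the normal component, which contracts, against the slowly varying tangential component, which determines both $B_\infty$ and its proximity to $\pi(B)$, is the crux; once this is in place, convergence and the relative error bound follow by summing geometric series and tracking curvature terms.
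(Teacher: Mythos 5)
Your overall strategy is the right one in spirit, but there is a genuine gap at the step you yourself flag as the crux, and it is not a technicality: the fixed-basepoint linearization cannot deliver the contraction you need. You expand $\pi_j(B)-A=P_{T_j}(B-A)+R_j(B)$ with $\|R_j(B)\|\leq C\|B-A\|^2$ and then hope to absorb $R_j$ into the linear contraction of the normal component. But the quantity being contracted is $\dist(B_k,\M_1\cap\M_2)$, while the remainder is controlled by the \emph{full} displacement $\|B_k-A\|$, whose tangential part stays of order $s$ along the whole orbit. So your one-cycle estimate has the form $\dist(B_{k+2},\M_0)\leq c\,\dist(B_k,\M_0)+O(s^2)$, and the additive $O(s^2)$ term does not shrink as $\dist(B_k,\M_0)\to 0$. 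This only traps the orbit in an $O(s^2/(1-c))$-neighborhood of the intersection; it does not give a geometric rate $\rho<1$ for $\dist(B_k,\M_0)$, hence no summable step sizes, no Cauchy sequence, and no relative error bound of the form $\epsilon\,\dist(B,\M_0)$. (The same problem infects your identification of the normal displacement with $\dist(B,\M_0)$: the affine space $A+T_0$ and the curved $\M_1\cap\M_2$ already differ by $O(\|B-A\|^2)$.) The final step, ``the deviation vanishes on $\M_0$, hence by smoothness it is at worst linear in the normal displacement,'' is likewise not a valid inference: the map $B\mapsto B_\infty$ is not known to be well defined, let alone $C^1$, before convergence is proved.

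The paper's resolution is precisely to linearize at the \emph{moving} footpoint $\pi(B_k)$ on the intersection rather than at $A$: it introduces $\rho_j(B)=P_{\tilde{T}_{\M_j}(\pi(B))}(B)$ and proves (Proposition \ref{p4}) that $\|\pi_j(B)-\rho_j(B)\|<5\sqrt{\epsilon_2}\,\|B-\pi(B)\|$, i.e.\ the nonlinearity error is \emph{multiplicative} in $\dist(B,\M_0)$ with an arbitrarily small constant, not additive of size $O(s^2)$. Combined with Lemma \ref{l3} (the footpoint is preserved by $\rho_1,\rho_2$) and continuity of the angle $\sigma$, this yields the genuine one-step contraction $\|\pi_1(B)-\pi(B)\|<c\,\|B-\pi(B)\|$ together with a drift bound $\|\pi(\pi_1(B))-\pi(B)\|<\epsilon_6\|B-\pi(B)\|$ (Theorem \ref{t1}); summing the resulting geometric series gives both convergence and the $\epsilon\|B-\pi(B)\|$ bound. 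To repair your argument you would need to replace the expansion around $A$ by an expansion around $\pi(B_k)$ (or otherwise prove a remainder bound of the form $o(1)\cdot\dist(B,\M_0)$ uniformly on the neighborhood), which is essentially reconstructing the paper's Propositions \ref{p1} and \ref{p4}.
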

The full version of the main theorem is given in Section \ref{ap}.

\begin{figure}[ht]
\centering \psfrag{Bi}[c]{$B_\infty$} \psfrag{piB}[c]{$\pi(B)$}
\psfrag{A}[c]{$A$} \psfrag{ed}[c]{$< \epsilon d$} \psfrag{d}[c]{$d$}
\psfrag{s}[c]{$s$} \psfrag{BB}[c]{$B$}
\psfrag{M1M2}[c]{$\mathcal{M}_1 \cap \mathcal{M}_2$}
\includegraphics[width=.5\linewidth]{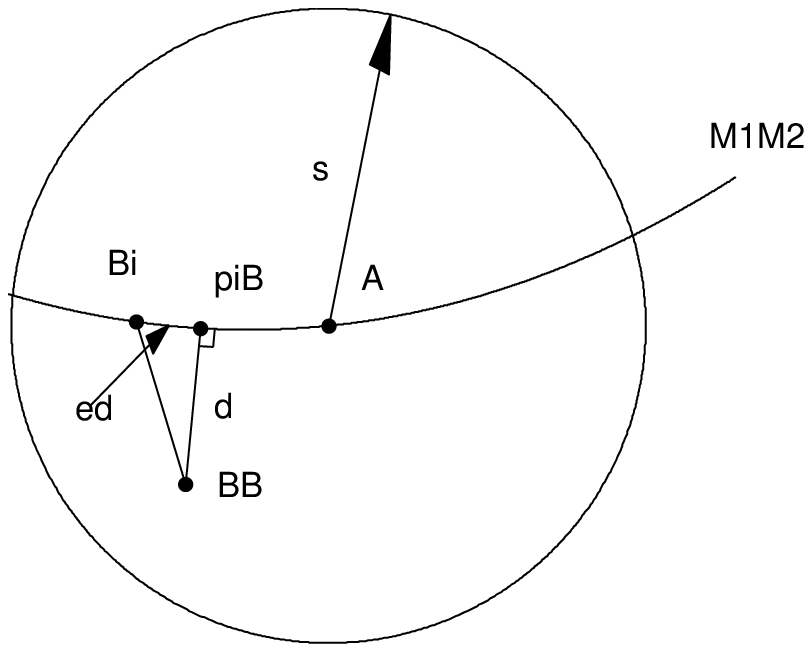}
\end{figure}

The improvement over Theorem \ref{tLM} mainly consists of two items.
Primarily, the assumption that the surfaces be non-tangential is not
at all restrictive, and in particular there is no implication on the
dimensions of $\M_1$ and $\M_2$. For the applications we are aware
of, the set of tangential points is very small, if it exists at all.
Secondly, as has been highlighted before, we are usually interested
not just in any point of $\M_1\cap \M_2$, but the closest point
$\pi(B)$. Here the theorem says that in relative terms, i.e. after
dividing with the distance to $\M_1\cap\M_2$, the error is small if
the distance to $\M_1\cap\M_2$ is small. It is also worth mentioning
that we only assume that the manifolds are $C^2$, although the
manifolds are $C^\infty$ in the applications that we are aware of.

In order to motivate the need for certain technicalities in the
abstract setting, as well as the need for a theorem concerning
low-dimensional manifolds, we will develop the theory in parallel
with an example, namely that when $\K$ is the space $\m_{n,n}(\C)$
of complex $n\times n$-matrices, $\M_1$ is the set of Hankel
matrices and $\M_2$ the set of matrices of rank at most $k$. The
former is a subspace of real dimension $2(n-1)$, whereas the latter
is not actually a manifold. However, it is ``locally'' a manifold of
real dimension $2(2nk-k^2)$ at all matrices $A$ with rank precisely
$k$, (which clearly constitutes the majority of matrices in the
set). If $k$ is small and $A$ is a non-tangential intersection
point, then $\dim(T_{\M_1})+\dim(T_{\M_2})=2(2nk-k^2+n-1)$, which is
much less than $\dim \K=2 n^2$, unless $k$ is close to $n$. Another
thing that does not match between this example and the theory
outlined above is that the projection $\pi_2$ is not uniquely
defined at certain points. This is common in algorithmic theory and
can be dealt with by using point-to-set maps, following
Zangwill \cite{zangwill}. However, in our case, this is not
necessary since we show that the projections are locally well
defined near non-tangential points, (Proposition \ref{p2}).
Moreover, from a practical perspective this is unnecessary since such
points are rarely encountered in applications.

For the above example, the interest in $\M_1\cap\M_2$ lies in the
fact that such sequences are samplings of functions which are sums
of $k$ exponential functions. Given a function $f$ on an interval
the alternating projections algorithm can then be used to find
approximations of $f$ by sums of $k$ exponential functions, which is
a problem of great practical interest. We demonstrate the idea in Figure \ref{fig_signal}. More thorough examples are conducted in \cite{freqest_altproj}.
\begin{figure}[tbh!]
\unitlength=1in
\begin{center}
\psfrag{00}[][l]{$(0,0)$} \psfrag{11}[][l]{$(1,1)$}
\includegraphics[width=2in]{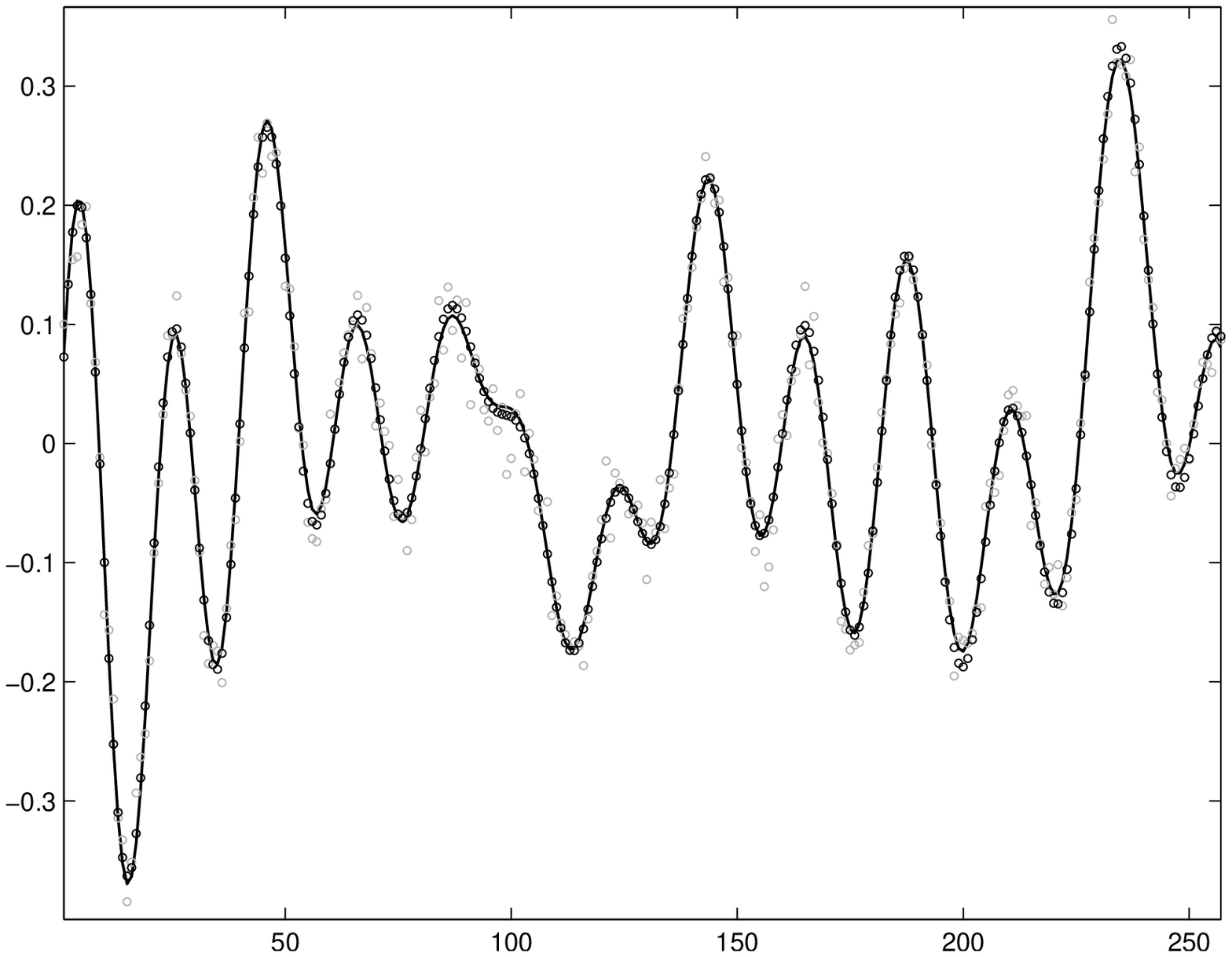}
\includegraphics[width=2in]{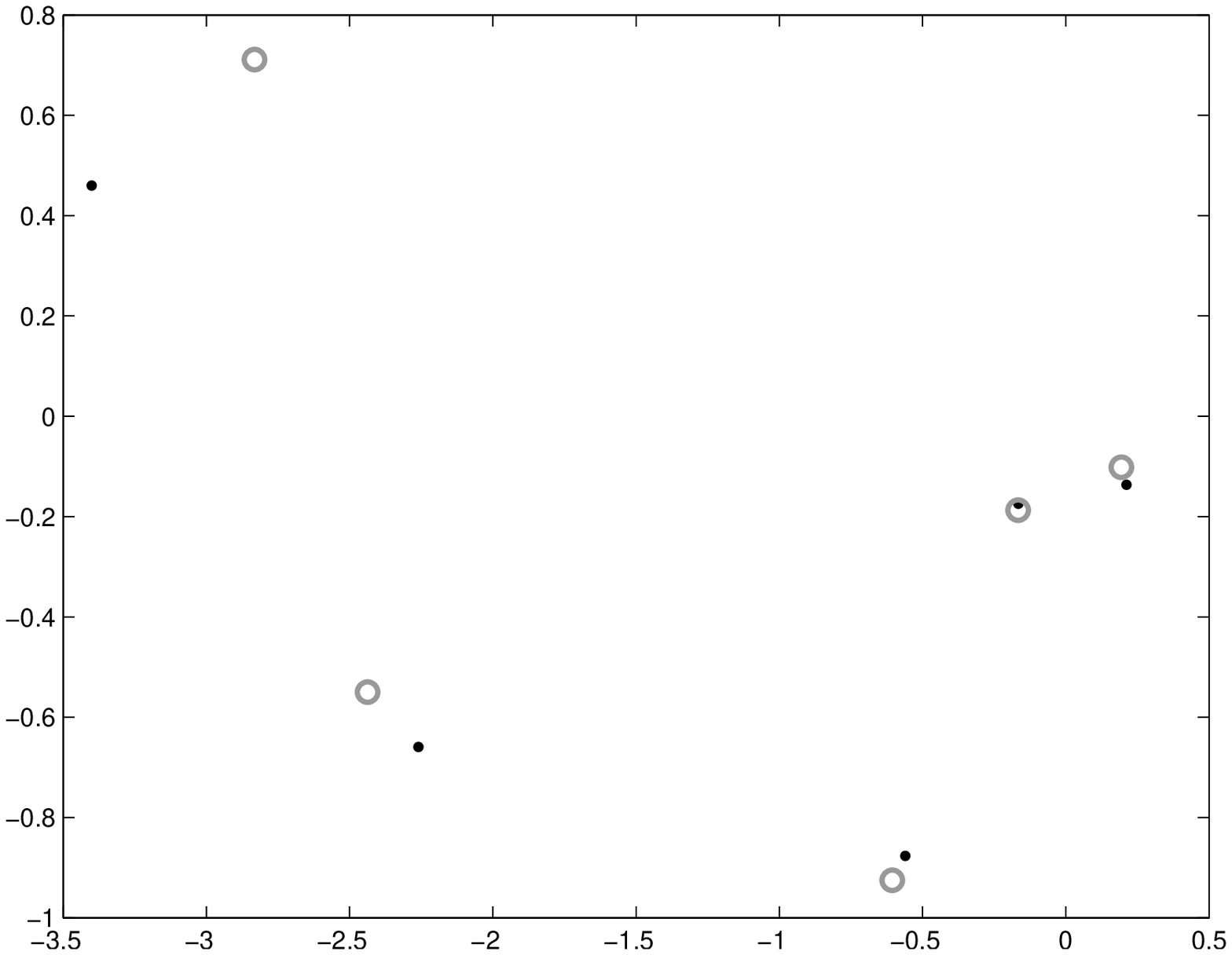}
\end{center}
\caption{Signal with five exponentials. Left panel: original signal
(black solid line); signal with white noise (gray circles); and the reconstruction
using the alternating Hankel projection method (black circes). Right panel:
original exponential nodes (black dots), estimated nodes from noisy
signal (gray circles.)\label{fig_signal}}
\end{figure}

\section{Case study; rank $k$ matrices versus Hankel matrices}\label{I}

We include this section for the reader to get the picture of a
typical application. However, this particular application is treated
in detail in \cite{freqest_altproj}, and therefore we will be very brief
here. In the setting considered by Lewis and Malick, the fact that
$\M_1\cap\M_2$ is itself a manifold (around a point $A$, say) follows
by the tranversality assumption (\ref{transversal}) and standard
differential geometry. In our (more general) setting, $\M_1\cap\M_2$
may fail to be a manifold. However, in the example developed in this
section, $\M_1\cap\M_2$ will be a manifold locally. In fact, it is
not hard to see that it will always happen for algebraic manifolds,
which is the case for all of the various applications presented in
\cite{cadzow}.

Let $n\in\N$ be fixed, let $\K=\m_{n,n}(\C)$ and let $\H\subset\K$
be the set of Hankel matrices, e.g., matrices of the form
\begin{equation}\label{hankel}\left(
                                            \begin{array}{ccccc}
                                              a_1 & a_2 & a_3 & \cdots & a_n \\
                                              a_2 & a_3 & \iddots & a_n & a_{n+1} \\
                                              a_3 & \iddots & \iddots & \iddots & \vdots \\
                                              \vdots & a_n & \iddots & \iddots & a_{2n-2} \\
                                              a_n & a_{n+1} & \cdots & a_{2n-2} & a_{2n-1} \\
                                            \end{array}
                                          \right)
\end{equation}
$\H$ is a linear subspace and, hence,  a manifold at each point, of
(real) dimension $2(2n-1)$. Denoting the matrix in (\ref{hankel}) by
$H(a)$, where $a=(a_1,\ldots,a_{2n-1})$, we obtain a natural chart
for $\H$, (by identifying $\C$ with $\R^2$ in the obvious way).

Given $k<n$, we denote by $\r_k\subset\K$ the set of matrices of
rank less than or equal to $k$. We wish to do alternating
projections as outlined in the introduction between $\H$ and $\r_k$,
which is slightly complicated by the fact that $\r_k$ is not a
manifold. However, it turns out that $\r_k$ is locally a manifold of
(real) dimension $2(2nk-k^2)$, apart from at some exceptional
points. More precisely, suppose $A\in\r_k$, and use the singular
value decomposition of $A$ to find $\sigma_A\in(\R^+)^k$ and unitary
matrices $U_A,V_A\in\m_{n,k}$ such that
\begin{equation}\label{parark}A=V_A\left(
                                   \begin{array}{cccc}
                                     \sigma_{A,1} & 0 & \cdots & 0 \\
                                     0 & \sigma_{A,2} & \ddots &\vdots \\
                                     \vdots & \ddots & \ddots & 0 \\
                                     0 & \cdots & 0 & \sigma_{A,k} \\
                                   \end{array}
                                 \right) U_A^*=V_A I_\sigma U_A^*.
\end{equation}
The vast majority of matrices in $\r_k$ satisfy
\begin{equation}\label{distinct}\sigma_{A,1}>\sigma_{A,2}>\ldots>\sigma_{A,k}>0,\end{equation}
and an arbitrarily small numerical perturbation will yield distinct
singular values. The subset of $\r_k$ satisfying (\ref{distinct})
will be denoted $\r_k^d$, where $d$ stands for distinct. If $\M$ is
a manifold and its closure $\overline{\M}$ is such that
$\overline{\M}\setminus\M$ is a union of manifolds of lower
dimension than $\M$, we will say that $\overline{\M}\setminus\M$ is
\textit{thin}. The proof of the following proposition can be found
in \cite{freqest_altproj}.
\begin{proposition}
$\r_k^d$ is a manifold of (real) dimension $2(2nk-k^2)$. Moreover,
$\r_k=\overline{\r_k^d}$ and $\r_k\setminus\r_k^d$ is thin.
\end{proposition}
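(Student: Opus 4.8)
The plan is to establish the three assertions separately, relying throughout on the classical stratification of matrix space by rank together with careful bookkeeping of the symmetries of the singular value decomposition. First I would treat the manifold structure and the dimension count. Recall the standard fact that the set of $n\times n$ complex matrices of rank \emph{exactly} $k$ is a smooth (indeed complex-analytic) submanifold of $\K=\m_{n,n}(\C)$ of complex codimension $(n-k)^2$, hence of real dimension $2\big(n^2-(n-k)^2\big)=2(2nk-k^2)$; locally this is seen by choosing coordinates in which a fixed $k\times k$ minor is invertible and expressing the condition rank $=k$ through the vanishing of the associated Schur complement. Since the singular values $\sigma_1\geq\ldots\geq\sigma_n$ depend continuously on the matrix, the strict inequalities in (\ref{distinct}) cut out an \emph{open} subset of this manifold, so $\r_k^d$ is an open submanifold and inherits the dimension $2(2nk-k^2)$. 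As a consistency check one can use the parametrization $(\sigma,U,V)\mapsto V I_\sigma U^*$ of (\ref{parark}): here $\sigma$ ranges over the open cone $\sigma_1>\ldots>\sigma_k>0$ ($k$ real parameters) and $U,V$ over the complex Stiefel manifold of $k$-frames in $\C^n$ (each of real dimension $2nk-k^2$), and the map is invariant under the free action of the diagonal phase torus $U,V\mapsto UD,VD$ with $D=\mathrm{diag}(e^{i\theta_1},\ldots,e^{i\theta_k})$; quotienting by this $k$-dimensional torus recovers $k+2(2nk-k^2)-k=2(2nk-k^2)$.

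Next I would prove $\r_k=\overline{\r_k^d}$. The inclusion $\overline{\r_k^d}\subset\r_k$ is immediate once one notes that $\r_k$ is closed, being the common zero set of all $(k+1)\times(k+1)$ minors. For the reverse inclusion it suffices to show density: given $A\in\r_k$, take a full singular value decomposition $A=\tilde V\Sigma\tilde U^*$; since $\Rank A\leq k$, at most $k$ of the diagonal entries of $\Sigma$ are nonzero, so I can replace the top $k$ entries by nearby values $\sigma_1'>\ldots>\sigma_k'>0$ and leave the rest equal to zero. The resulting matrix lies in $\r_k^d$ and is arbitrarily close to $A$, whence $A\in\overline{\r_k^d}$.

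The main obstacle is the thinness of $\r_k\setminus\r_k^d$, which I would establish by exhibiting this set as a finite union of manifolds of dimension strictly below $2(2nk-k^2)$. The complement splits into two types of strata. The rank-deficient part is the union, over $0\leq j<k$, of the manifolds of matrices of rank exactly $j$, each of dimension $2(2nj-j^2)$; since $j\mapsto 2nj-j^2$ is strictly increasing on $[0,n]$, every such stratum has dimension $<2(2nk-k^2)$. The remaining part consists of rank-exactly-$k$ matrices for which a coincidence $\sigma_i=\sigma_{i+1}$ occurs, and here the key point is that colliding singular values \emph{lowers} the dimension: one loses a free singular value parameter, while the stabilizer of $(\sigma,U,V)\mapsto V I_\sigma U^*$ jumps from the phase torus $\T^k$ (dimension $k$) to a group containing a $U(2)$ block acting on the coincident directions (dimension $k+2$). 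Counting $(k-1)+2(2nk-k^2)-(k+2)=2(2nk-k^2)-3$, the one-coincidence locus has real codimension $3$, and higher multiplicities reduce the dimension further. The delicate step, which I expect to require the most care, is to verify that each coincidence locus is \emph{genuinely} a smooth manifold of the asserted dimension, rather than merely a set of the expected size: this amounts to checking that the enlarged group acts freely and properly on the frames transverse to the stabilizer, so that the quotient is a manifold --- the singular-value analogue of the von Neumann--Wigner codimension-$3$ phenomenon for Hermitian matrices with a repeated eigenvalue. Granting this, $\r_k\setminus\r_k^d$ is a finite union of manifolds each of dimension $<2(2nk-k^2)$, which is exactly the statement that it is thin.
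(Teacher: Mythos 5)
The paper itself does not prove this proposition: it states explicitly that the proof is deferred to the companion paper \cite{freqest_altproj}, so there is no in-paper argument to compare yours against. Judged on its own merits, your proposal follows the standard and, as far as I can tell, correct route. The identification of $\r_k^d$ as an open subset of the rank-exactly-$k$ stratum (complex codimension $(n-k)^2$ via the Schur-complement chart), the density argument by perturbing singular values, and the closedness of $\r_k$ via minors are all right, and the two dimension counts ($2(2nk-k^2)$ directly and via the Stiefel parametrization modulo the phase torus) agree. The stratification of $\r_k\setminus\r_k^d$ into lower-rank strata plus equal-singular-value loci is complete (note that $\sigma_k=0$ is exactly the rank-deficient case, so the two types exhaust the complement), and the codimension-$3$ count for a single coincidence is consistent with the von Neumann--Wigner phenomenon you invoke. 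The one step you leave open --- that each coincidence locus is genuinely a manifold of the predicted dimension --- is indeed the crux, but it is standard: the compact group $\prod_i U(m_i)$ acts freely (since the frames are injective) and properly on the product of Stiefel manifolds, so the quotient is a manifold by the quotient manifold theorem, and the induced map into $\m_{n,n}(\C)$ is an injective immersion which is a homeomorphism onto its image by continuity of the spectral data. With that paragraph filled in, your argument is a complete proof and is almost certainly the same one carried out in \cite{freqest_altproj}.
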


The structure of the set $\r_k$ around the exceptional points in
$\r_k\setminus\r_k^d$ can be rather complicated, and although the
sequence generated by the alternating projection scheme could
theoretically approach such a point, we have not done any analysis
of convergence properties in this setting. This seems to be a hard
problem. The rate of convergence is however very poor around such
points (compare Figure \ref{fig1} with Figure \ref{fig3}).  The first
contains 12 iterations and the second 100.

We note that by the Eckart-Young theorem \cite{horn_johnsson}, it is
easy to project any matrix $B\in\m_{n,n}$ onto its closest point in
$\r_k$ while using the Frobenius norm $$\|B\|_{2}=\sqrt{\sum_{i,j}
|b_{i,j}|^2}.$$ Specifically, the theorem says that if
$B=V_BI_\sigma U_B$ is a singular value decomposition of $B$, then
the closest point in $\r_k$ is given by replacing $I_\sigma$ with
$I_\tau$ where $\tau=(\sigma_1,\ldots,\sigma_k,0,\ldots,0)$. The
closest point is thus unique as long as the singular values are
distinct, which is always the case when working with ``real
numerical'' data, so we will for simplicity treat the projection
onto $\r_k$ as a well defined map which we denote by $\pi_{\r_k}$.
More stringently one could work with ``point to set''-maps, as in
\cite{cadzow} and \cite{zangwill}.

The last manifold that needs to be discussed is
$$\H_k=\r_k\cap\H,$$ i.e., the set of Hankel matrices with rank $\leq
k$. It is easily seen that, given any $\alpha\in\C$, the matrix
\begin{equation}\label{hankelrank} H(\alpha)=\left(
                                            \begin{array}{ccccc}
                                              1 & \alpha & \alpha^2 & \cdots & \alpha^{n-1} \\
                                              \alpha & \alpha^2 & \iddots & \alpha^{n-1} & \alpha^n \\
                                              \alpha^2 & \iddots & \iddots & \iddots & \vdots \\
                                              \vdots & \alpha^{n-1} & \iddots & \iddots & \alpha^{2n-3} \\
                                              \alpha^{n-1} & \alpha^n & \cdots & \alpha^{2n-3} & \alpha^{2n-2} \\
                                            \end{array}
                                          \right)
\end{equation}
defines a rank 1 Hankel matrix, and thus
\begin{equation}\label{H}\mathfrak{H}(c,\alpha)=\sum_{j=1}^k c_jH(\alpha_j)\end{equation} has rank $k$. This is the typical
case, as we will show below. Let us denote the image of
$\mathfrak{H}$ intersected with $\r_k^d$ by $\H_k^n$, where $n$
stands for ``nice''. Again, the analysis is complicated by some
exceptional points. For example, $\frac{d^j}{d\alpha^j}H(\alpha)$ is
a Hankel matrix of rank $j$ which is not covered by $\mathfrak{H}$.
Moreover, rank $k$ Hankel matrices containing summands of this type
are usually inside $\r_k^d$, but despite that, some investigations
show that the manifold structure of $\H_k$ collapses around such
points. The sup-script $``n''(=nice)$ is thus really more restrictive
than $``d''(=distinct)$. Nevertheless, $\H_k\setminus\H_k^n$ is thin
and never encountered in practice, so we omit a study of such cases.
The proof of the following result can be found in \cite{freqest_altproj}.
\begin{proposition}\label{p5}
\begin{itemize}\item[]
\item[] $\H$ is a $2(2n-1)$-dimensional linear subspace.
\item[] $\H_k^n$ is a $4k$-dimensional manifold which is dense in
$\H_k$, its complement is thin.\item[] The map $\pi_{\r_k}$ is well
defined on all points in $\m_{n,n}$ except a for thin subset.
\end{itemize}
\end{proposition}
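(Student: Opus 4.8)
The plan is to prove the three assertions of Proposition~\ref{p5} essentially as a chain of verifications, deferring the genuinely technical manifold-collapse phenomena to \cite{freqest_altproj} as the statement itself suggests. The first bullet is immediate and requires essentially no work: the Hankel matrices form the linear span of the $2n-1$ matrices $H(e_i)$ (one for each entry $a_i$), these are clearly linearly independent over $\C$, and upon identifying $\C$ with $\R^2$ as indicated after \eqref{hankel} we obtain a real subspace of dimension $2(2n-1)$. This is just the observation recorded in the paragraph preceding \eqref{hankel}.

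For the third bullet, I would invoke the Eckart--Young discussion given just above the statement. The map $\pi_{\r_k}$ is well defined at $B$ precisely when the truncated singular value decomposition is unambiguous, which fails only when $\sigma_k=\sigma_{k+1}$, i.e. when the $k$-th and $(k+1)$-th singular values of $B$ coincide. The locus where two singular values of a matrix agree is the vanishing set of a nontrivial polynomial (for instance a resultant-type condition, or the statement that the characteristic polynomial of $B^*B$ has a repeated root, cut out by the discriminant), and is therefore a thin algebraic subvariety of $\m_{n,n}$. Hence $\pi_{\r_k}$ is well defined off a thin set.

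The main content, and the main obstacle, is the second bullet: that $\H_k^n$ is a $4k$-dimensional manifold, dense in $\H_k$, with thin complement. First I would show that the parametrization $\mathfrak{H}(c,\alpha)=\sum_{j=1}^k c_j H(\alpha_j)$ from \eqref{H} is generically an immersion. Each rank-one summand $H(\alpha_j)$ is, up to the scalar $c_j$, the outer product $v(\alpha_j)v(\alpha_j)^{\mathsf T}$ of the Vandermonde vector $v(\alpha)=(1,\alpha,\dots,\alpha^{n-1})^{\mathsf T}$, so $\mathfrak H(c,\alpha)$ is a Hankel matrix of rank exactly $k$ whenever the $\alpha_j$ are distinct and the $c_j$ nonzero (the vectors $v(\alpha_j)$ being Vandermonde-independent when $k\leq n$). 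The real parameter space for $(c,\alpha)\in(\C^*)^k\times\C^k$ (modulo the finite permutation symmetry) has real dimension $4k$, matching the asserted dimension; the immersion property follows from computing the differential of $\mathfrak H$ in $c_j$ and $\alpha_j$ and checking that $H(\alpha_j)$ together with $\frac{d}{d\alpha}H(\alpha)\big|_{\alpha_j}$ form an independent family, again a Vandermonde-type nondegeneracy. Intersecting the image with $\r_k^d$ (to enforce distinct singular values) removes a thin set, and one must verify that the immersion descends to an embedding onto $\H_k^n$, so that it is a genuine $4k$-dimensional manifold.

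The hard part is precisely the density and thinness claim, i.e. controlling the exceptional set $\H_k\setminus\H_k^n$. As the text flags, confluent configurations where several $\alpha_j$ collide produce derivative-type summands $\frac{d^j}{d\alpha^j}H(\alpha)$, and near such points the manifold structure of $\H_k$ degenerates even though the matrix may still lie in $\r_k^d$; the subtlety is that $``n''$ is strictly stronger than $``d''$, so one cannot simply quote the previous proposition. I would argue that every rank-$k$ Hankel matrix admits a (possibly confluent) representation through the theory of Hankel operators and their symbols --- the classical Kronecker theorem identifies finite-rank Hankel matrices with rational symbols, whose generic form is exactly a sum of $k$ simple poles giving \eqref{H} --- and that the locus of confluent (non-simple-pole) symbols is a union of lower-dimensional strata, hence thin. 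Establishing this stratification rigorously, and confirming that it indeed forces a drop in dimension rather than merely a change of chart, is the delicate point; since the full argument is carried out in \cite{freqest_altproj}, I would state the result and refer there for the complete treatment.
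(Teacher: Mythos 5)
The paper itself does not prove Proposition~\ref{p5}: the sentence immediately preceding it reads ``The proof of the following result can be found in \cite{freqest\_altproj}'', so there is no in-paper argument to compare yours against, and your decision to defer the genuinely technical stratification to the companion paper is exactly what the authors do. On their own terms, your treatments of the first and third bullets are correct: the dimension count for $\H$ is immediate from the parametrization $a\mapsto H(a)$, and the Eckart--Young truncation is ambiguous precisely on the locus $\sigma_k(B)=\sigma_{k+1}(B)>0$, which is contained in the zero set of the discriminant of the characteristic polynomial of $B^*B$, a proper real algebraic subvariety and hence a union of lower-dimensional manifolds, i.e.\ thin in the paper's sense. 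Your outline of the second bullet (Vandermonde outer products $v(\alpha)v(\alpha)^{\mathsf T}$, the $4k$ real parameters of $(c,\alpha)$ modulo permutation, the confluent-Vandermonde independence of $H(\alpha_j)$ and $\tfrac{d}{d\alpha}H(\alpha)|_{\alpha_j}$ giving the immersion since $2k\le 2n-2$) is the standard and correct route.

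One caution on the part you defer: for \emph{finite} Hankel matrices the Kronecker-type classification has boundary cases beyond the confluent ones you mention. For example the rank-one Hankel matrix whose symbol sequence is $(0,\dots,0,1)$ is not of the form \eqref{H} for any $(c,\alpha)$; it arises only as a limit $c=\alpha^{-(2n-2)}$, $\alpha\to\infty$ (a ``pole at infinity''). These strata also lie in $\H_k\setminus\H_k^n$ and must be included in the thinness/density argument alongside the derivative-type summands $\tfrac{d^j}{d\alpha^j}H(\alpha)$. A dimension count shows both families do sit on strictly lower-dimensional strata (coalescing two nodes trades two complex node parameters for one), so the claim is true, but a complete proof must enumerate all such degenerations; since the paper relegates exactly this to \cite{freqest\_altproj}, your proposal is at the same level of completeness as the source.
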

We now explain why one would like to do alternating projections
between $\H$ and $\r_k$. Let $\pi_{\H}$ be the orthogonal projection
onto $\H$, and let $\pi(B)$ denote the closest point to $B$ in
$\H\cap\r_k$. Say we have a sampled function on an interval, e.g.,
the signal in Figure \ref{fig_signal}. Denote the corresponding sequence
by $f$ and note that
\begin{equation}\label{eq53} \sum_{j=1}^{2n-1}|f_j|^2 (n-|j-n|)=\|H(f)\|_2^2.\end{equation} We denote the weight sequence in (\ref{eq53}) by
$w$ and the corresponding norm on $\C^{2n-1}$ by $\|f\|_w^2$. By
Proposition \ref{p5}, the function $g$ defining $\pi(B)=H(g)$ is
almost surely in the range of $\mathfrak{H}$, i.e. of the form
$$g=\sum_{j=1}^k c_j(\alpha_j^l)_{l=0}^{2n-2}.$$
Such functions are precisely what one encounters when sampling sums of $k$
exponential functions, and hence $g$ is the closest such function to $f$ in the norm given by (\ref{eq53}). Approximating a given function
with sums of a predetermined number of exponentials is a problem
with a large number of applications. There is, however, no computationally efficient method for computing $g$. In contrast, the projections $\pi_{\r_k}$ and $\pi_{\H}$ are relatively easy to compute, and hence, by the results of this paper, we can get a fairly good approximation of $g$ by finding the limit of the sequence of alternating projections, starting at $H(f)$.

For the sake of efficient computations, care has to be taken
concerning the implementation of $\pi_{\r_k}$ and $\pi_{\H}$.
Constructing fast algorithms for this purpose is one of the topics
of a companion paper \cite{freqest_altproj}, where we also discuss other
aspects of this specific application of alternating projections as
well as give proofs of the above claims. In the present paper, we
will continue discussing this application in Section \ref{numexp}.

\section{Preliminaries}\label{s1}
Let $\K$ be a Hilbert space of dimension $n\in\N$. Given $A\in\K$ and
$r>0$ we write $\B(A,r)$ or $\B_{\K}(A,r)$ for the open ball
centered at $A$ with radius $r$. Since $\K$ is finite-dimensional it
has a unique Euclidean topology. Any subset $\M$ of $\K$ will be
given the induced topology from $\K$.

\begin{definition}\label{def man}
We say that $\M\subset\K$ is locally an $m$-dimensional
$C^p$-manifold around $A\in\M$ if there exists $r_1>0$ and a
$C^p$-map $\phi:\B_{\R^m}(0,r_1)\rightarrow \K$ with the following
properties:
\begin{itemize}
\item $d\phi(x)$ is injective for all $x\in \B_{\R^m}(0,r_1)$,
\item $\phi(0)=A,$
\item $\phi$ is a homeomorphism onto an open neighborhood of $A$ in
$\M$.
\end{itemize}
\end{definition}
This is in line with the standard definition of $C^p$-manifolds, see
Theorem 2.1.2 \cite{berger_gostiaux} for a number of equivalent
definitions. As a consequence of the homeomorphism condition, note
that there exists an $s_1>0$ such that
\begin{equation}\label{eq77}\M\cap \B_{\K}(A,s_1)=\mathsf{Im} \phi\cap
\B_{\K}(A,s_1),\end{equation} where $\mathsf{Im} \phi$ denotes the
image of $\phi$. Given $B\in\mathsf{Im} \phi$, there exists a unique
$x\in\B(0,r_1)$ such that $B=\phi(x)$. We will without further comment
denote this $x$ by $x_B$. All the manifolds considered in this paper
are at least $C^1$, and hence we have
\begin{equation}\label{eq65}\phi(x)=C+d\phi(x_C)(x-x_C)+\textsl{o}(x-x_C)\end{equation} where
$\textsl{o}$ stands for ``little ordo''.\footnote{i.e. it stands for a
function with the property that $\textsl{o}(x)/\|x\|$ extends by
continuity to 0 and takes the value 0 there.} We define the tangent
space $T_\M(B)$ by $T_{\M}(B)=\Ran d\phi(x_B)$. It is a standard
fact from differential geometry that this definition is independent
of $\phi$. Moreover, we set
$$\tilde{T}_\M(B)=B+T_\M(B),$$
i.e., $\tilde{T}_\M(B)$ is the affine linear manifold which is
tangent to $\M$ at $B$. Throughout this section, $\M$ will be a
locally $C^p$-manifold at $A$, where $p\geq 1$ and we associate with
it $r_1$ and $s_1$ as in Definition \ref{def man} and (\ref{eq77}).
Moreover, there will follow a row of decreasing numbers $r_2,~r_3$
and $s_2, ~s_3$ etc., related to the above $A$, and the reader has to
bear in mind where these numbers were defined. The following
proposition basically says that the affine tangent-spaces are close
to $\M$ locally.

\begin{proposition}\label{p1}
Let $\M$ be a locally $C^1$-manifold at $A$. For each $\epsilon_2>0$
there exists $s_2,$ $0<s_2<s_1$, such that for all
$C\in\B(A,s_2)\cap\M$ we have
\begin{itemize}\item[($i$)]$ \dist(B,\tilde{T}_\M(C))\leq \epsilon_2\|B-C\|, \quad B\in
\B(A,s_2)\cap\M.$
\item[($ii$)]$\dist(B,\M)\leq \epsilon_2\|B-C\|, \quad $~$\quad B\in
\B(A,s_2)\cap T_\M(C).$
\end{itemize}
\end{proposition}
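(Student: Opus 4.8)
The plan is to exploit the first-order Taylor expansion (\ref{eq65}) of the chart $\phi$, together with the homeomorphism property and a compactness argument, to control the gap between the manifold and its affine tangent spaces uniformly over a small ball. The key quantitative fact I would establish first is that on a small enough parameter ball the chart is bi-Lipschitz, so that distances measured in $\K$ and distances measured in the parameter domain $\B_{\R^m}(0,r_1)$ are comparable. Indeed, since $d\phi$ is continuous and $d\phi(0)$ is injective, there exist constants $0<c\leq C<\infty$ and a radius $r_2\leq r_1$ such that
\begin{equation}\label{eq-bilip}
c\|x-y\|\leq \|\phi(x)-\phi(y)\|\leq C\|x-y\|,\qquad x,y\in\B_{\R^m}(0,r_2).
\end{equation}
Passing to a ball $\B_\K(A,s_2')$ small enough that $\M\cap\B_\K(A,s_2')$ lies inside $\phi(\B_{\R^m}(0,r_2))$ (possible by (\ref{eq77}) and the homeomorphism property), every relevant point $B,C$ then has a well-defined preimage $x_B,x_C$, and $\|B-C\|\asymp\|x_B-x_C\|$.

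For part ($i$), I would fix $C\in\B(A,s_2)\cap\M$ and write $B=\phi(x_B)$. Since $\tilde T_\M(C)=C+\Ran d\phi(x_C)$, the point $C+d\phi(x_C)(x_B-x_C)$ lies in $\tilde T_\M(C)$, so
\begin{equation}\label{eq-dist-est}
\dist(B,\tilde T_\M(C))\leq \|\phi(x_B)-C-d\phi(x_C)(x_B-x_C)\|=\|\textsl{o}(x_B-x_C)\|,
\end{equation}
using the expansion (\ref{eq65}) centered at $C$. Now the heart of the matter is to make this $\textsl{o}$ term uniformly small. The clean way to do this is to use that $\phi\in C^1$, so $d\phi$ is uniformly continuous on the compact ball $\overline{\B_{\R^m}(0,r_2)}$; then by the mean-value/fundamental-theorem estimate, for any $\delta>0$ one can shrink $r_2$ so that $\|\phi(x_B)-\phi(x_C)-d\phi(x_C)(x_B-x_C)\|\leq \delta\|x_B-x_C\|$ for all $x_B,x_C$ in the smaller ball. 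Combining this with the lower bilipschitz bound $\|x_B-x_C\|\leq \|B-C\|/c$ from (\ref{eq-bilip}) gives $\dist(B,\tilde T_\M(C))\leq (\delta/c)\|B-C\|$. Choosing $\delta=c\,\epsilon_2$ and letting $s_2$ be the corresponding ball radius in $\K$ yields ($i$).

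For part ($ii$) the situation is slightly different because $B$ now ranges over the affine tangent space $\tilde T_\M(C)$ rather than over $\M$, so I cannot simply invoke (\ref{eq65}). Here I would write $B=C+d\phi(x_C)v$ for some $v\in\R^m$ with $\|v\|$ comparable to $\|B-C\|$ (via the injectivity of $d\phi(x_C)$ and the uniform lower bound on its smallest singular value), and then produce an explicit competitor point \emph{on} $\M$, namely $\phi(x_C+v)$, provided $x_C+v$ stays in the parameter ball. The estimate $\dist(B,\M)\leq\|B-\phi(x_C+v)\|=\|C+d\phi(x_C)v-\phi(x_C+v)\|$ is then again exactly a first-order remainder of the type controlled in the previous paragraph, so the same uniform Taylor bound finishes the argument. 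The main obstacle, and the point requiring the most care, is precisely this uniformity: the ``$\textsl{o}$'' in (\ref{eq65}) is a priori only pointwise in the base point $C$, and one must upgrade it to an estimate that holds with a single radius $s_2$ for \emph{all} $C\in\B(A,s_2)\cap\M$ simultaneously. This is exactly what uniform continuity of $d\phi$ on a compact parameter ball provides, and it is the reason the hypothesis is $C^1$ rather than merely differentiable. A minor secondary bookkeeping point is to ensure the competitor parameters ($x_B$ in part ($i$), $x_C+v$ in part ($ii$)) remain inside $\B_{\R^m}(0,r_1)$, which is handled by shrinking $s_2$ once more at the end.
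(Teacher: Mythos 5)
Your proposal is correct and follows essentially the same route as the paper's proof: establish the uniform bi-Lipschitz comparability of $\|x_B-x_C\|$ and $\|B-C\|$ via bounds on the singular values of $d\phi$ over a compact parameter ball, then control the first-order Taylor remainder uniformly using uniform continuity of $d\phi$, with the competitor $C+d\phi(x_C)(x_B-x_C)$ for ($i$) and $\phi(x_C+v)$ for ($ii$). The key point you single out, upgrading the pointwise $\textsl{o}$ in (\ref{eq65}) to a uniform estimate over all base points $C$, is exactly the crux the paper handles by equicontinuity of $d\phi$ on $\cl(\B_{\R^m}(0,r))$.
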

\begin{proof}

Given $r<r_1$, we first show that ${\|\phi(x)-\phi(y)\|}/{\|x-y\|}$
is uniformly bounded above and below for $x,y\in\B_{\R^m}(0,r)$.  By
(\ref{eq65}) and the mean value theorem we have
\begin{align*}&
\|\phi(y)-\phi(x)\|=\|d\phi(z)(y-x)\|
\end{align*}
for some $z$ on the line between $x$ and $y$. Now, $d\phi(z)$
depends continuously on $z$ and its singular values
$\sigma_1(d\phi(z)),\ldots,\sigma_m(d\phi(z))$ depend continuously
on the matrix entries \cite[p191]{Franklin}, hence
\begin{equation}\label{eq56}
\inf_{z\in\B_{\R^m}(0,r)} \{\sigma_n(d\phi(z))\}\|y-x\|\leq
\|\phi(y)-\phi(x)\|\leq \sup_{z\in\B_{\R^m}(0,r)}
\{\sigma_1(d\phi(z))\}\|y-x\|.
\end{equation}
By Definition \ref{def man}, $\sigma_m(d\phi(x))$ is never zero and
$cl(\B_{\R^m}(0,r))$ is compact, so both the $\inf$ and $\sup$
amount to finite positive numbers, as desired. ($cl$ denotes the
closure).

We now prove $(i)$; by (\ref{eq65}) and the mean value theorem we
have
\begin{align*}&\dist(B,\tilde{T}_\M(C))\leq
\|B-(C+d\phi(x_C)(x_B-x_C))\|=\|\phi(x_B)-\phi(x_C)-d\phi(x_C)(x_B-x_C))\|\leq\\&\leq
{\sup}\Big\{\|d\phi(y)-d\phi(x_C)\|:{y\in\B_{\R^m}(0,r)\text{ such
that }\|y-x_C\|\leq\|x_B-x_C\| }\Big\}\|x_B-x_C\|.
\end{align*}
Since $d\phi$ is continuous on the compact set $cl(\B_{\R^m}(0,r))$,
it is also equicontinuous. It follows that for each $\epsilon>0$ we
can pick a $\delta>0$ such that
\begin{equation*}{\sup}\big\{\|d\phi(y)-d\phi(x)\|:{x,y\in\B_{\R^m}(0,r)\text{
such that }\|y-x\|\leq\delta}\big\}<\epsilon.\end{equation*} Set
$r_2<\min(\delta/2,r)$ and let $s_2$, $0<s_2<s_1$, be such that
$B\in B_{\K}(A,s_2)\cap \M$ implies $\|x_B\|<r_2$, which we can do
since $\phi$ is a homeomorphism with $\phi(0)=A$. For
$B,C\in\B_{\K}(A,s_2)$, we then have (by equation (\ref{eq56})) that
\begin{align*}&\dist(B,\tilde{T}_\M(C))\leq
\epsilon\|x_B-x_C\|\leq \epsilon k\|\phi(x_B)-\phi(x_C)\|=\epsilon
k\|B-C\|,
\end{align*}
where $k=(\inf_{z\in\B_{\R^m}(0,r)} \{\sigma_n(d\phi(z))\})^{-1}$,
(cf equation (\ref{eq56})). By letting $\epsilon=\epsilon_2/k$, the
desired statement follows.

The proof of $(ii)$ is similar. Given $C$ let $y$ be such that
$B=C+d\phi(x_C) y$ and note that $\phi(x_C+y)\in\M$ so
\begin{align*}&\dist(B,\M)\leq \|\phi(x_C+y)-(C+d\phi(x_C) y)\|\leq{\sup}\Big\{\|d\phi(x_C+ty)-d\phi(x_C)\|:t\in[0,1]\Big\}\|y\|.
\end{align*}
Moreover, if $s_2$ is such that $\|x_C\|\leq r$, it is easily seen
that
$$\|y\|\leq 2 s_2\Big/\sup_{z\in\B_{\R^m}(0,r)}
\{\sigma_1(d\phi(z))\}.$$ Thus, by picking $s_2$ small enough we can
ensure that for all $B,C\in\B_{\K}(A,s_2)$, all points of the form
$x_C+ty$ are bounded by some pregiven $\delta>0$. From here the
proof follows a similar path as $(i)$, we omit the remaining
details.
%
%
%
%
%
\end{proof}

The value of $\epsilon_2$ will be determined later, so for the
moment we will consider it as a constant and the corresponding value
of $s_2$ will be kept for future use. However, $r_2$ was an internal
variable in the above proof, and to avoid too many subindices, we
let $r_2$ take a new value in the proof of the next proposition, (in
contrast to the value of $r_1$ which was given in Definition
\ref{def man} and connected to the fixed point $A$). The next
proposition shows that projection on $\M$ is a \textit{locally} well
defined operation.

\begin{proposition}\label{p2}
Let $\M$ be a locally $C^p$-manifold at $A$ with $p\geq 1$. Then
there exists $s_3>0$ and a $C^{p-1}$ map
$$\pi:\B_{\K}(A,s_3)\rightarrow \M$$ such that for all $B\in
\B_{\K}(A,s_3)$ there exists a unique closest point in $\M$ which is
given by $\pi(B)$. Moreover, $C\in\M\cap\B_{\K}(A,s_3)$ equals
$\pi(B)$ if and only if $B-C\perp T_\M(C)$.
\end{proposition}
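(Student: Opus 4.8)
The plan is to establish existence and uniqueness of the local closest-point projection by exhibiting it as the solution of a smooth equation to which the implicit function theorem applies, and then to read off both the $C^{p-1}$ regularity and the orthogonality characterization from that construction. First I would set up the normal-bundle map. Using the chart $\phi:\B_{\R^m}(0,r_1)\to\K$ from Definition \ref{def man}, a point $C=\phi(x)\in\M$ has tangent space $T_\M(C)=\Ran d\phi(x)$, and since $d\phi(x)$ is injective its orthogonal complement is a smoothly varying $(n-m)$-dimensional subspace. The idea is to choose, on a possibly smaller ball, a $C^{p-1}$ orthonormal frame $\nu_1(x),\ldots,\nu_{n-m}(x)$ for this normal space (obtainable by applying Gram--Schmidt to smoothly varying spanning vectors, which is $C^{p-1}$ since $d\phi$ is $C^{p-1}$) and to define
\begin{equation*}
\Psi:\B_{\R^m}(0,r_2)\times\R^{n-m}\to\K,\qquad \Psi(x,t)=\phi(x)+\sum_{i=1}^{n-m} t_i\,\nu_i(x).
\end{equation*}
Then $\Psi$ is $C^{p-1}$ and $\Psi(0,0)=A$. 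Its differential at $(0,0)$ sends $\R^m\times\R^{n-m}$ onto $T_\M(A)\oplus T_\M(A)^\perp=\K$, so $d\Psi(0,0)$ is a bijection. By the inverse function theorem $\Psi$ is a $C^{p-1}$ diffeomorphism from a neighborhood of $(0,0)$ onto a neighborhood of $A$; its inverse supplies $C^{p-1}$ coordinates $(x(B),t(B))$ for every $B$ close enough to $A$.

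Next I would identify the projection with the $x$-component of these coordinates. The natural candidate is $\pi(B)=\phi(x(B))$, which is manifestly a $C^{p-1}$ map into $\M$, being a composition of $C^{p-1}$ maps. To prove it really is the \emph{nearest} point, I would pass to the first-order geometry supplied by Proposition \ref{p1}. The key point is a quantitative comparison: for $B$ near $A$ and any competitor $C'\in\M$ near $A$, part $(i)$ of Proposition \ref{p1} forces $C'$ to lie within $\epsilon_2\|B-C'\|$ of its affine tangent plane, so that, after choosing $\epsilon_2$ small, the squared-distance function $C'\mapsto\|B-C'\|^2$ on $\M$ is strictly convex enough in the chart to have a unique critical point, namely the one where $B-C'\perp T_\M(C')$. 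I would verify that $\Psi(x,t)=B$ means exactly $B-\phi(x)=\sum t_i\nu_i(x)\in T_\M(\phi(x))^\perp$, so the point $\pi(B)=\phi(x(B))$ produced above satisfies the orthogonality condition; uniqueness of that orthogonal foot point on the restricted ball then gives both that $\pi(B)$ is the unique nearest point and the stated ``if and only if'' characterization.

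The main obstacle, and the step I would spend the most care on, is promoting the orthogonality condition from a \emph{necessary} condition for a local minimizer to a statement that the orthogonal foot point is the \emph{global} minimizer over $\M\cap\B_\K(A,s_3)$ and is \emph{unique}. A stationary point of the distance need not be a minimizer, and a priori there could be several orthogonal foot points. The way I would close this gap is to make the radius $s_3$ small in a manner dictated by $\epsilon_2$: shrinking to the region where $\Psi$ is a diffeomorphism already guarantees uniqueness of the solution $(x,t)$ of $\Psi(x,t)=B$, hence uniqueness of the orthogonal foot point; and a Pythagorean estimate using Proposition \ref{p1}$(i)$ shows that any other point $C'\in\M$ of the ball satisfies $\|B-C'\|^2\ge\|B-\pi(B)\|^2+(1-O(\epsilon_2))\|C'-\pi(B)\|^2>\|B-\pi(B)\|^2$, so $\pi(B)$ strictly beats every competitor. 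One must also rule out that the nearest point escapes the coordinate ball; this is handled by the standard observation that $\dist(B,\M)\le\|B-A\|<s_3$, so the minimizer cannot be farther from $A$ than a controlled multiple of $s_3$, and a final shrinkage of $s_3$ keeps all candidates inside the chart. With these radius choices the three assertions follow simultaneously.
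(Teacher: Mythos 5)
Your plan is essentially the paper's own proof: both construct the normal-bundle map $\Psi(x,t)=\phi(x)+\sum t_i\nu_i(x)$ (the paper's $\sigma$), invert it locally by the inverse function theorem to define $\pi(B)=\phi(x(B))$, read off the $C^{p-1}$ regularity from the composition, and obtain existence, uniqueness and the orthogonality characterization by combining the first-order necessary condition (any nearest point $C$ satisfies $B-C\perp T_\M(C)$, hence is a preimage of $B$ under $\Psi$) with the injectivity of $\Psi$ on a suitably shrunk ball. One caveat: the auxiliary Pythagorean bound $\|B-C'\|^2\ge\|B-\pi(B)\|^2+(1-O(\epsilon_2))\|C'-\pi(B)\|^2$ does not follow from Proposition \ref{p1}($i$), which only controls the normal component of $C'-\pi(B)$ by $\epsilon_2\|C'-\pi(B)\|$ (first order, not quadratic), so the cross term $2\epsilon_2\|B-\pi(B)\|\,\|C'-\pi(B)\|$ can dominate $\|C'-\pi(B)\|^2$ for competitors very close to $\pi(B)$; this estimate is redundant, however, since your ``unique orthogonal foot point plus first-order optimality of the minimizer'' argument---which is exactly the paper's---already carries the proof.
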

\begin{proof}
Recall that $n$ is the dimension of $\K$ and $m$ the dimension of
$\M$ at $A$. By standard differential geometry there exists an
$r_2<r_1$ and $C^{p-1}$-functions
$f_1,\ldots,f_{n-m}:\B_{\R^m}(0,r_2)$ with the property that
$$\big(T_\M(\phi(x))\big)^\perp=\Span \{
f_1(x),\ldots,f_{n-m}(x)\}$$ for all $x\in\B_{\R^m}(0,r_2)$, (see
e.g. Theorem 2.7.7 in \cite{berger_gostiaux}). Define
$\sigma:\B_{\R^m}(0,r_2)\times \R^{n-m}\rightarrow\K$ via
$$\sigma(x,y)=\phi(x)+\sum_{i=1}^{n-m}y_if_i(x).$$
Consider the set $\mathcal{S}\subset\K$ of points whose multiplicity
under $\sigma$ is greater than 1, (i.e. all points hit more than
once by $\sigma$). By the inverse function theorem, the set
$\sigma^{-1}(\mathcal{S})$ can not have 0 as an accumulation-point,
for it says that there exists an $r<r_2$ such that $\sigma$
restricted to ${\B_{\R^n}(0,r)}$ is a diffeomorphism onto its image.
Pick $r_3<\min\big(r,\dist(\sigma^{-1}(\mathcal{S}),0)\big)$ and
pick $s_3$ such that
\begin{equation}\label{eq1}\M\cap
\B_{\K}(A,2s_3)=\phi(\B_{\R^m}(0,r_3))\cap
\B_{\K}(A,2s_3),\end{equation} and
\begin{equation}\label{eq3}\B_\K(A,s_3)\subset\sigma({\B_{\R^n}(0,r_3)}).\end{equation}
Note that, given $B\in\B_\K(A,s_3)$ there exists a unique
$(x_B,y_B)$
such that
$B=\sigma((x_B,y_B))$ and moreover $\|(x_B,y_B)\|\leq r_3$ by (\ref{eq3}). 
We define $$\pi(B)=\phi(x_B).$$ To see that $\pi$ is a
$C^{p-1}$-map, let $\theta:\R^m\times\R^{n-m}\rightarrow\R^m$ be
given by $\theta((x,y))=x$ and note that
$$\pi=\phi\circ\theta\circ(\sigma|_{\B_{\R^n}(0,r_3)})^{-1}.$$
We now show that $\pi(B)$ have the desired properties. By the
construction,
$$\pi(B)-B\in \Span \{ f_1(x_B),\ldots,f_{n-m}(x_B)\}\perp
T_\M(\phi(x_B))=T_\M(\pi(B)).$$ Now suppose $C\in\M$ is a closest
point to $B$. Since $\|A-B\|<s_3$ we clearly must have
$\|C-A\|<2s_3$ so by (\ref{eq1}) there exists a
$x_C\in\B_{\R^m}(0,r_3)$ with $\phi(x_C)=C$. Since $r_3<r_1$ we know
that $\M$ is completely determined by $\phi$ in the vicinity of $C$.
In particular, it makes sense to talk about $T_{\M}(C)$ and it is
easily seen that $B-C \perp T_{\M}(C)$, for by (\ref{eq65}) we have
\begin{align*}&\|\phi(x)-B\|^2=\|C+d\phi(x_C)(x-x_C)+\textsl{o}(x-x_C)-B\|=\\&=\|C-B\|^2+2\langle
C-B, d\phi(x_C)(x-x_C) \rangle+\textsl{o}(\|x-x_C\|)\end{align*} and
hence the scalar product needs to be zero for all $x$'s. Thus there
is a $y$ such that $B=\sigma((x_C,y))$. But since $(x_B,y_B)$ is the
unique point with this property, we deduce that $x_B=x_C$ and hence
$\pi(B)=\phi(x_B)=\phi(x_C)=C$. This establishes the first part of
the proposition. Now let $C$ be as in the second part of the
proposition. As above we have $C=\phi(x_C)$ with $\|x_C\|<r_3$ and
the orthogonality implies that there exists a $y$ with
$B=\sigma((x_C,y))$ and again this implies $C=\pi(B)$, as desired.
\end{proof}
\section{Non-tangentiality}\label{s2}
Suppose now that we are given closed sets $\M_1$ and $\M_2$ which
locally are manifolds around an intersection point
$A\in\M_1\cap\M_2$.

\begin{definition} An intersection point $A$ will be called regular if there are numbers
$m_1,$ $m_2$, $m$ and $p\geq 1$ such that
\begin{itemize}
\item $\M_j$ is locally an $m_j$-dimensional $C^p$-manifold at $A$, $j=1,2.$
\item $\M_1\cap\M_2$ is locally an $m$-dimensional $C^p$-manifold at $A$.
\end{itemize}
\end{definition}
Note that the set of regular points is clearly a relatively open set
in $\M_1\cap\M_2$. Next, we introduce angles. For more information
on angles, we refer to \cite{lewis_malick}.
\begin{definition}\label{def angle} For any regular point $A$, we define the angle $\alpha(A)$ of $\M_1$ and $\M_2$ at $A$ to be the $\cos^{-1}$ of the number
$$\sigma(A)=\lim_{r \rightarrow 0} \sup \left\{\frac{\langle B_1-A,B_2-A\rangle}{\|B_1-A\|\|B_2-A\|}:B_j\in\M_j,\|B_j-A\|<r \text{ and } B_j-A\perp T_{\M_1\cap\M_2}(A)\right\}.$$
\end{definition}
Given a linear subspace $\M$ we let $P_{\M}$ denote the orthogonal
projection onto $\M$. It is easily verified that
\begin{equation}\label{eq777}\begin{aligned}&\sigma(A)=\sup \left\{\frac{\langle
B_1-A,B_2-A\rangle}{\|B_1-A\|\|B_2-A\|}:B_j\in T_{\M_j}(A)\ominus
T_{\M_1\cap\M_2}(A)\right\}=\\&=\|P_{T_{\M_1}(A)\ominus
T_{\M_1\cap\M_2}(A)}P_{T_{\M_2}(A)\ominus
T_{\M_1\cap\M_2}(A)}\|=\|P_{T_{\M_1}(A)}P_{T_{\M_2}(A)}-P_{T_{\M_1\cap\M_2}(A)}\|.\end{aligned}\end{equation}
If $\M_1$ and $\M_2$ are hyperplanes through the origin, then it is
clear that the above definition coincides with the classical
definition;\begin{equation}\label{eq54} \cos
\alpha_{clas}(A)=\sigma_{clas}(A)=\|P_{\M_1}P_{\M_2}-P_{\M_1\cap\M_2}\|.
\end{equation}
However, it is important to note that
$\alpha_{clas}(A,T_{\M_1(A)},T_{\M_2(A)})$ and $\alpha(A)$ are not
necessarily the same. For example, take
$\M_1=\{(x_1,x_2,0,0,x_3):x\in\R^3\}$ and
$\M_2=\{(x_1,x_2,x_2^2,x_3,x_3):x\in\R^3\}$. Then
$\M_1\cap\M_2=\{(x,0,0,0,0):x\in\R\}$ and $0$ is a regular point.
Moreover $\alpha_{clas}(0,T_{\M_1(0)},T_{\M_2(0)})=\pi/4$ whereas
$\alpha(0)=0$. What goes wrong above is clearly that the two
surfaces are tangential to each other in the direction
$(0,0,1,0,0)$, and it is intuitively clear that when this is not the
case, the two concepts should coincide. To avoid such obstacles, we
therefore introduce:

\begin{definition}\label{def st}
$\M_1$ and $\M_2$ are said to be non-tangential at $A$ if $A$ is
regular and they have a positive angle at $A$, i.e. if
$\sigma(A)<1$. We will often simply say that $A$ is non-tangential.
\end{definition}
Note that the angle is always a number between 0 and $\pi/2$, so in
an intuitive sense only exceptional points do not satisfy
non-tangentiality. We have no intention of formalizing this
statement, but point out already that for the example considered in
Section \ref{I}, this is indeed the case, which will be proven in
\cite{freqest_altproj}.

\begin{theorem}\label{t3}
With the notation as in Section \ref{I}, we have that the set of
tangential points between $\r_k$ and $\H$ is thin in $\H_k$.
\end{theorem}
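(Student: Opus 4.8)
The plan is to reduce tangentiality to a linear-algebra condition on tangent spaces and then to show that this condition can only fail on the thin exceptional set $\H_k\setminus\H_k^n$. First I would record a workable reformulation of non-tangentiality. Writing $V_j=T_{\M_j}(A)\ominus T_{\M_1\cap\M_2}(A)$, the number $\sigma(A)$ of Definition \ref{def angle} is the cosine of the minimal angle between $V_1$ and $V_2$; as these are finite-dimensional the supremum in (\ref{eq777}) is attained, so $\sigma(A)=1$ if and only if $V_1\cap V_2\neq\{0\}$. Since $V_1\cap V_2=\big(T_{\M_1}(A)\cap T_{\M_2}(A)\big)\ominus T_{\M_1\cap\M_2}(A)$, this gives the clean criterion that a regular point $A$ is non-tangential exactly when
$$T_{\r_k}(A)\cap T_{\H}(A)=T_{\H_k}(A).$$
Because $\r_k$, $\H$ and $\H_k^n$ are complex manifolds, all three tangent spaces are complex subspaces, $T_\H(A)=\H$, and the inclusion $T_{\H_k}(A)\subseteq T_{\r_k}(A)\cap\H$ together with $\dim_\C T_{\H_k}(A)=2k$ shows the intersection always has complex dimension at least $2k$; non-tangentiality is then the statement that it equals exactly $2k$.

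Next I would make the intersection explicit on $\H_k^n$ through the chart $A=\mathfrak{H}(c,\alpha)=\sum_{j=1}^k c_jH(\alpha_j)$ of (\ref{H}). Setting $u(\alpha)=(1,\alpha,\dots,\alpha^{n-1})^T$, so that $H(\alpha)=u(\alpha)u(\alpha)^T$ by (\ref{hankelrank}), and $F(\alpha)=[u(\alpha_1)\,|\cdots|\,u(\alpha_k)]$, we have $A=F\,\mathrm{diag}(c)\,F^T$ with $F$ of full rank whenever the $\alpha_j$ are distinct. The holomorphic (algebraic) tangent space to the complex variety $\r_k$ at the smooth point $A$ is then $T_{\r_k}(A)=F\,\m_{k,n}(\C)+\m_{n,k}(\C)\,F^T$, and choosing a local holomorphic frame $W(\alpha)$ for $\ker F(\alpha)^T$ one checks, by a dimension count against $\dim_\C T_{\r_k}(A)=2nk-k^2$, that $T_{\r_k}(A)=\{X:W^TXW=0\}$. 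Thus $T_{\r_k}(A)\cap\H$ is the kernel of the $\C$-linear map $b\mapsto W^TH(b)W$ on $b\in\C^{2n-1}$, whose matrix is holomorphic in $\alpha$ and \emph{independent of} $c$.

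The crux is to evaluate this kernel. Identifying $w\in\C^n$ with the polynomial $P_w(z)=\sum_i w_iz^{i-1}$ of degree $\le n-1$, a direct expansion gives $w^TH(b)\,w'=\sum_s b_s\,[z^{s-1}]\big(P_wP_{w'}\big)$, i.e.\ the pairing of $b$ with the coefficient vector of the product $P_wP_{w'}$; moreover $w\in\ker F(\alpha)^T$ exactly when $P_w$ is divisible by $q(z)=\prod_{j=1}^k(z-\alpha_j)$. Hence $W^THW=0$ says that $b$ annihilates every product $P_wP_{w'}$ with $P_w,P_{w'}$ multiples of $q$ of degree $\le n-1$, and these products span the space $q^2\cdot\{\text{polynomials of degree}\le 2n-2-2k\}$, of dimension $2n-1-2k$. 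Since the underlying pairing on $\C^{2n-1}$ is nondegenerate, the kernel has dimension exactly $(2n-1)-(2n-1-2k)=2k$ for \emph{every} choice of distinct $\alpha_j$. Comparing with the lower bound, $T_{\r_k}(A)\cap\H=T_{\H_k}(A)$ throughout $\H_k^n$, so every point of $\H_k^n$ is non-tangential.

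I expect the main obstacle to be precisely this product-span computation---identifying the span of the $P_wP_{w'}$ with $q^2\cdot\{\deg\le 2n-2-2k\}$, and thereby ruling out any ``extra'' Hankel direction in $T_{\r_k}(A)$---with the case $n=2$, $k=1$ serving as a sanity check (there the single condition $\alpha^2a_1-2\alpha a_2+a_3=0$ cuts out a $2$-dimensional space that is exactly $T_{\H_1}(A)$). Granting it, the conclusion is immediate and in fact sharp: the set of tangential points is contained in $\H_k\setminus\H_k^n$, which is thin by Proposition \ref{p5}, and the tangential set, being a closed analytic subset of $\H_k$ sitting inside this thin set, is itself a union of manifolds of dimension below $4k$, hence thin.
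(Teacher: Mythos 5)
The paper contains no proof of Theorem \ref{t3} to compare against: the sentence immediately preceding the statement explicitly defers the argument to the companion paper \cite{freqest_altproj}. Your proposal therefore has to stand on its own, and as far as I can check it does. The reduction of non-tangentiality to $T_{\r_k}(A)\cap T_{\H}(A)=T_{\H_k}(A)$ is precisely the paper's criterion (\ref{eq angle1}); at $A=F\,\mathrm{diag}(c)\,F^{T}$ with $F=[u(\alpha_1)|\cdots|u(\alpha_k)]$ the tangent space to the determinantal variety is indeed $\{X:W^{T}XW=0\}$ for $W$ spanning $\ker F^{T}$ (one containment is clear and both sides have complex dimension $2nk-k^2$); the identity $w^{T}H(b)w'=\sum_s b_s\,[z^{s-1}]\bigl(P_wP_{w'}\bigr)$ is correct; and the span of the products $q^2rr'$ over $\deg r,\deg r'\le n-1-k$ is all of $q^2\cdot\{\deg\le 2n-2-2k\}$ because the monomial choices $r=z^{a}$, $r'=z^{b}$ already exhaust every degree up to $2n-2-2k$. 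Nondegeneracy of the coefficient pairing then forces $\dim_{\C}\bigl(T_{\r_k}(A)\cap\H\bigr)=2k=\dim_{\C}T_{\H_k}(A)$, so every point of $\H_k^{n}$ is non-tangential and the tangential locus sits inside the thin set $\H_k\setminus\H_k^{n}$. Your $n=2$, $k=1$ check is also consistent. Two small points, both of which the published text glosses over as well: you tacitly assume that $\H_k$ coincides with $\H_k^{n}$ near a point of $\H_k^{n}$, so that such a point is regular with the $4k$-real-dimensional tangent space of Proposition \ref{p5}; and the final upgrade from ``contained in a thin set'' to ``is itself thin'' needs the tangential locus to be an algebraic subset, which your determinantal description does in fact provide. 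Neither affects the correctness of the argument.
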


In the general setting, pathological examples do exist. For example,
take $\M_1=\{(x_1,x_2,0):x\in\R^2\}$ and
$\M_2=\{(x_1,x_2,x_1^2):x\in\R^2\}$. However, when $\M_1$ and $\M_2$
are defined by polynomials, one can use similar methods, as in
the proof of Theorem \ref{t3}, to show that under mild conditions, if
non-tangentiality holds at one point, then it holds everywhere
except for a thin set. We will not pursue this.

By (\ref{eq777}) it is easy to see that $A$ is non-tangential if and
only if
\begin{equation}\label{eq angle}\Big(T_{\M_1}(A)\ominus
T_{\M_1\cap\M_2}(A)\Big)\cap\Big(T_{\M_2}(A)\ominus
T_{\M_1\cap\M_2}(A)\Big)= \{0\}\end{equation} which in turn happens
if and only if
\begin{equation}\label{eq angle1} T_{\M_1}(A)\cap T_{\M_2}(A)=
T_{\M_1\cap\M_2}(A).\end{equation} This latter condition is usually
referred to as \textit{clean intersection} in microlocal analysis.
We have chosen the terminology non-tangential since it is more
intuitive. We now show that non-tangentiality is a weaker concept
than transversality, defined in (\ref{transversal}).

\begin{proposition}
Let $\M_1$ and $\M_2$ be two $C^1$-manifolds and $A\in\M_1\cap\M_2$
a transversal point. A transversal point $A$ is also non-tangential.
\end{proposition}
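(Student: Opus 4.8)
The plan is to check the two requirements of Definition \ref{def st} separately: that $A$ is regular, and that the angle condition $\sigma(A)<1$ holds. Both will come directly out of the differential-topological content of the transversality hypothesis $T_{\M_1}(A)+T_{\M_2}(A)=\K$.

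First I would establish regularity. Since $\M_j$ is locally an $m_j$-dimensional $C^1$-manifold at $A$ for $j=1,2$, and the sum of the tangent spaces is all of $\K$, the hypotheses of the classical transversal-intersection theorem are met (see e.g. \cite{berger_gostiaux}). That theorem produces a local $C^1$-manifold structure on $\M_1\cap\M_2$ at $A$ of dimension $m:=m_1+m_2-n$, so $A$ is regular with $p=1$, and it identifies the tangent space as
$$T_{\M_1\cap\M_2}(A)=T_{\M_1}(A)\cap T_{\M_2}(A).$$
If one prefers to see this last identity by hand, the inclusion $\subseteq$ always holds for an intersection of manifolds, while the dimension count $\dim(T_{\M_1}(A)\cap T_{\M_2}(A))=m_1+m_2-\dim(T_{\M_1}(A)+T_{\M_2}(A))=m_1+m_2-n=m$ shows the two spaces have equal dimension, forcing equality.

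With regularity secured, the tangent-space identity above is exactly the clean-intersection condition (\ref{eq angle1}). Invoking the equivalences recorded just above the proposition, namely that $A$ is non-tangential if and only if (\ref{eq angle}) holds if and only if (\ref{eq angle1}) holds, I conclude at once that $\sigma(A)<1$; that is, $A$ is non-tangential. The only genuinely substantive ingredient here is the transversal-intersection theorem, and especially the identification of $T_{\M_1\cap\M_2}(A)$ with $T_{\M_1}(A)\cap T_{\M_2}(A)$. The main thing to be careful about is a bookkeeping point: ensuring that the manifold produced by that theorem coincides locally with the set $\M_1\cap\M_2$ appearing in Definition \ref{def angle}, so that the regularity used there matches the hypotheses under which the equivalence with (\ref{eq angle1}) was derived.
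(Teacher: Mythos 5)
Your proposal is correct and follows essentially the same route as the paper: the paper likewise first gets regularity of $\M_1\cap\M_2$ (via the implicit function theorem applied to $\phi_1-\phi_2$, yielding dimension $m_1+m_2-n$) and then concludes by a linear-algebra dimension count. The only cosmetic difference is that the paper counts dimensions of the summands $T_{\M_j}(A)\ominus T_{\M_1\cap\M_2}(A)$ to verify condition (\ref{eq angle}), whereas you compute $\dim\bigl(T_{\M_1}(A)\cap T_{\M_2}(A)\bigr)$ to verify the equivalent condition (\ref{eq angle1}).
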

\begin{proof}
Applying the implicit function theorem to $\phi_1-\phi_2$ easily
yields that $\M_1\cap\M_2$ is a $C^1$-manifold of dimension
$m_1+m_2-n$ at $A$. Thus $T_{\M_1}(A)\ominus T_{\M_1\cap\M_2}(A)$
has dimension $n-m_2$ and $T_{\M_2}(A)\ominus T_{\M_1\cap\M_2}(A)$
has dimension $n-m_1$. Thus
$$\dim(T_{\M_1}(A)\ominus
T_{\M_1\cap\M_2}(A))+\dim(T_{\M_2}(A)\ominus
T_{\M_1\cap\M_2}(A))+\dim(T_{\M_1\cap\M_2}(A))= n.$$ But by
transversality the sum of the above subspaces equals $\K$, which can
only happen if (\ref{eq angle}) is satisfied.
\end{proof}

We will denote $\M_1\cap\M_2$ by $\M$, and the objects from Section
\ref{s1} associated to $\M_1$, $\M_2$ and $\M$, e.g. $\phi$, by
$\phi_1$, $\phi_2$ and $\phi$ respectively. We thus omit subindex
when dealing with $\M_1\cap\M_2$. We now prove that for
non-tangential points, the angle as defined here and the classical
angle of the respective tangent spaces coincide.

\begin{proposition}\label{prop45}
If $A$ is non-tangential, then
$$\alpha_{clas}(A,T_{\M_1(A)},T_{\M_2(A)})=\alpha(A).$$
\end{proposition}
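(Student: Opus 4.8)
The plan is to reduce the statement to the operator-norm expressions already recorded in the excerpt and then to invoke the characterization of non-tangentiality as a clean intersection. Write $V_j = T_{\M_j}(A)$ for $j=1,2$. The starting point is the third equality in (\ref{eq777}), which already tells us that $\cos\alpha(A) = \sigma(A) = \|P_{V_1}P_{V_2} - P_{T_{\M_1\cap\M_2}(A)}\|$. Thus the angle $\alpha(A)$ from Definition \ref{def angle}, although defined through a limit over manifold points, has been expressed purely in terms of the two tangent spaces, the only extra ingredient being the subspace $T_{\M_1\cap\M_2}(A)$ that is subtracted off.

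Next I would write down the cosine of the classical angle between the subspaces $V_1$ and $V_2$. Extending formula (\ref{eq54}) from hyperplanes to arbitrary subspaces gives $\cos\alpha_{clas}(A, V_1, V_2) = \|P_{V_1}P_{V_2} - P_{V_1\cap V_2}\|$. The two cosine expressions now differ in exactly one place: $T_{\M_1\cap\M_2}(A)$ is subtracted in $\sigma(A)$, whereas $V_1\cap V_2$ is subtracted in $\sigma_{clas}$.

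The decisive step is then to use the hypothesis. Since $A$ is non-tangential, (\ref{eq angle1}) gives the clean-intersection identity $V_1 \cap V_2 = T_{\M_1}(A)\cap T_{\M_2}(A) = T_{\M_1\cap\M_2}(A)$. Substituting this makes the two operator norms literally coincide, so $\sigma(A) = \sigma_{clas}(A)$, and since $\cos^{-1}$ is injective on $[0,1]$ we conclude $\alpha(A) = \alpha_{clas}(A, V_1, V_2)$. (This is precisely the mechanism behind the tangential counterexample given earlier in the section, where $V_1\cap V_2$ strictly contains $T_{\M_1\cap\M_2}(A)$ and the two angles genuinely differ.)

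I expect the only point needing care to be the middle step, namely the operator-norm formula for the classical angle between general subspaces, since (\ref{eq54}) is stated only for hyperplanes. This is handled by the same algebra already used to obtain the final equality in (\ref{eq777}): writing $M_0 = V_1 \cap V_2$, one has $P_{V_1}P_{V_2} - P_{M_0} = (P_{V_1} - P_{M_0})(P_{V_2} - P_{M_0}) = P_{V_1 \ominus M_0} P_{V_2 \ominus M_0}$, because $M_0 \subseteq V_1$ and $M_0 \subseteq V_2$ force $P_{V_1}P_{M_0} = P_{M_0}P_{V_2} = P_{M_0}$. The norm of the right-hand side is by definition the cosine of the classical angle between $V_1$ and $V_2$, which identifies $\|P_{V_1}P_{V_2} - P_{V_1\cap V_2}\|$ with $\cos\alpha_{clas}(A, V_1, V_2)$. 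Once this identity is in hand the proposition is immediate from (\ref{eq777}) and (\ref{eq angle1}); no limiting or estimation arguments are required, since the passage from manifold points to tangent vectors has already been carried out in (\ref{eq777}).
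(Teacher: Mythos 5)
Your proof is correct and follows essentially the same route as the paper's: the paper's proof is the one-line chain $\sigma(A)=\|P_{T_{\M_1}}P_{T_{\M_2}}-P_{T_{\M_1\cap\M_2}}\|=\|P_{T_{\M_1}}P_{T_{\M_2}}-P_{T_{\M_1}\cap T_{\M_2}}\|=\sigma_{clas}$, citing (\ref{eq777}) and using (\ref{eq angle1}) in the middle step, exactly as you do. Your additional verification that $P_{V_1}P_{V_2}-P_{V_1\cap V_2}=P_{V_1\ominus(V_1\cap V_2)}P_{V_2\ominus(V_1\cap V_2)}$ simply fills in an algebraic detail the paper leaves implicit, and the computation is sound.
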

\begin{proof}
Using (\ref{eq777}) it is easy to see that
\begin{align*}\sigma(A)=\|P_{T_{\M_1}}P_{T_{\M_2}}-P_{T_{\M_1\cap\M_2}}\|=\|P_{T_{\M_1}}P_{T_{\M_2}}-P_{T_{\M_1}\cap
T_{\M_2}}\|=\sigma_{clas}(A,T_{\M_1},{T_{\M_2}}),\end{align*} where
we used $(\ref{eq angle1})$ in the crucial step.
\end{proof}

\begin{proposition}\label{p3}
The function $\sigma$ in Definition \ref{def angle} is $C^{p-1}$ (on
the set of regular points in $\M_1\cap\M_2$). In particular,
non-tangentiality is a local property, i.e. if $A$ is
non-tangential, then the same holds for all $B\in\M_1\cap\M_2$ in a
neighborhood of $A$.
\end{proposition}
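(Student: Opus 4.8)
The plan is to read off the regularity of $\sigma$ from formula (\ref{eq777}), which expresses it as $\sigma(A)=\|P_{T_{\M_1}(A)}P_{T_{\M_2}(A)}-P_{T_{\M_1\cap\M_2}(A)}\|$, and to show that each of the three orthogonal projections appearing here depends $C^{p-1}$ on the base point $A$ as $A$ ranges over the regular points of $\M=\M_1\cap\M_2$. The basic tool is the elementary fact that for an injective linear map $L$ the orthogonal projection onto its range is $P_{\Ran L}=L(L^*L)^{-1}L^*$, and that the right-hand side is a $C^\infty$ function of $L$ on the open set of injective maps, since matrix multiplication, the adjoint, and inversion of an invertible matrix are all smooth. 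Thus it suffices to exhibit, for each factor, a $C^{p-1}$ assignment $A\mapsto L$ with $\Ran L$ equal to the relevant tangent space.

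For $\M$ itself this is immediate: parametrising $\M$ near $A$ by $\phi$, a nearby regular point is $\phi(x)$ with $x$ in a ball of $\R^m$, and $T_\M(\phi(x))=\Ran d\phi(x)$; since $\phi$ is $C^p$ the map $x\mapsto d\phi(x)$ is $C^{p-1}$ with injective values, so $x\mapsto P_{T_\M(\phi(x))}$ is $C^{p-1}$. For $T_{\M_j}(\phi(x))$ one must first recover the $\phi_j$-coordinate of the point $\phi(x)\in\M\subset\M_j$. Here I would invoke the construction in the proof of Proposition \ref{p2}, applied to $\M_j$: it produces a local $C^{p-1}$ diffeomorphism of a ball in $\R^n$ onto a neighbourhood of $A$ in $\K$ whose inverse, followed by the coordinate projection $\theta$, returns the $\phi_j$-parameter of any nearby point of $\M_j$. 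Writing $\psi_j$ for the composition of this $C^{p-1}$ parameter-recovery map with $\phi$, we obtain a $C^{p-1}$ map $x\mapsto\psi_j(x)$ returning the $\phi_j$-parameter of $\phi(x)$, and then $x\mapsto d\phi_j(\psi_j(x))$ is $C^{p-1}$ with injective values spanning $T_{\M_j}(\phi(x))$. Composing with $L\mapsto L(L^*L)^{-1}L^*$ shows $x\mapsto P_{T_{\M_j}(\phi(x))}$ is $C^{p-1}$.

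Combining the three factors, the matrix-valued map $x\mapsto P_{T_{\M_1}}P_{T_{\M_2}}-P_{T_\M}$ is $C^{p-1}$, and $\sigma$ is its operator norm. The main obstacle is precisely this final step: the operator norm, equivalently the largest singular value of a smoothly varying matrix, is only Lipschitz and fails to be differentiable exactly where its multiplicity jumps, so genuine $C^{p-1}$ regularity of $\sigma$ can be claimed only on the open subset where the top singular value is simple; on the remaining regular points one still retains continuity of $\sigma$ from the Lipschitz property of the norm. For the assertion that is actually used this is enough: continuity of $\sigma$ makes $\{A:\sigma(A)<1\}$ relatively open in the regular set, so if $A$ is non-tangential then $\sigma(B)<1$, i.e. $B$ is non-tangential, for every $B\in\M_1\cap\M_2$ in a neighbourhood of $A$.
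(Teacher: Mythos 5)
Your argument is correct in substance and follows the same basic strategy as the paper's: both reduce the question to the regularity, in the chart variable $x$, of orthogonal projections onto smoothly varying subspaces, via the formula $P_{\Ran L}=L(L^*L)^{-1}L^*$ applied to an injective $C^{p-1}$ frame, and both then take an operator norm. The one structural difference is which expression in (\ref{eq777}) you start from: you differentiate the three projections $P_{T_{\M_1}}$, $P_{T_{\M_2}}$, $P_{T_{\M_1\cap\M_2}}$ separately, recovering the $\phi_j$-parameter of $\phi(x)$ through the tubular-neighbourhood construction of Proposition \ref{p2}, whereas the paper works with the reduced spaces $T_{\M_j}(\phi(x))\ominus T_{\M_1\cap\M_2}(\phi(x))$, spanning them by $M_j(x)$, obtained by projecting a $C^{p-1}$ frame of $T_{\M_1\cap\M_2}^\perp$ onto $T_{\M_j}$ and selecting $m_j-m$ columns, and then uses $\sigma(\phi(x))=\|P_{\Ran M_1(x)}P_{\Ran M_2(x)}\|$. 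Both routes are legitimate; yours needs the parameter-recovery step $\psi_j$, the paper's needs the small observation that one can choose a locally injective subframe of constant size. Either way the work is comparable.

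Your closing caveat is not a defect of your proof but a genuine observation about the statement itself, and it applies equally to the paper's proof: the paper concludes by asserting that singular values are $C^\infty$ functions of the matrix entries, but the largest singular value of a smoothly varying matrix family is in general only locally Lipschitz (consider $\mathrm{diag}(1+t,1-t)$), and is $C^{p-1}$ only where it is simple. So what is actually established everywhere on the regular set is continuity of $\sigma$, with $C^{p-1}$ regularity off the set where the top singular value has multiplicity. As you note, continuity is all that the ``in particular'' clause requires, and it is also all that the rest of the paper uses (e.g.\ in (\ref{ghdf}) in the proof of Theorem \ref{t1}), so nothing downstream is affected; your version of the final step is the more careful one.
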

\begin{proof}
Let $A\in\M_1\cap\M_2$ be a regular point and let $\phi$ be the usual chart. We need to show that $\sigma\circ\phi$ is $C^{p-1}$ around 0. 
By standard differential geometry (see e.g. \cite{berger_gostiaux})
there exists $C^{p-1}$-functions $f_1,\ldots,f_{n-m}$ defined on the
domain of $\phi$ such that
$$\big(T_{\M_1\cap\M_2}(\phi(x))\big)^\perp=\Span\{f_1(x),\ldots,f_{n-m}(x)\}.$$
It is easy to see that (with $j=1$ or $j=2$)
\begin{equation}\label{eq4}T_{\M_j}(\phi(x))\ominus T_{\M_1\cap\M_2}(\phi(x))=\Span
\{P_{T_{\M_j}(\phi(x))} f_1(x), \ldots P_{T_{\M_j}(\phi(x))}
f_{n-m}(x)\}.\end{equation} Since the dimension of
$T_{\M_j}(\phi(x))$ is constant $m_j$, it is easy to see that the
functions on the right are $C^{p-1}$ as well. Moreover, as the
dimension of the space on the left in (\ref{eq4}) is constant
$m_j-m$, we can pick $m_j-m$ of the functions on the right and put
them as columns of an injective matrix $M_j(x)$ such that
$$T_{\M_j}(\phi(x))\ominus T_{\M_1\cap\M_2}(\phi(x))=\Ran M_j(x)$$
for $x$ in a neighborhood of 0. By $(\ref{eq777})$ we have for $x$
around 0 that
\begin{equation}\label{eq5}\sigma(\phi(x))=\|P_{\Ran M_1(x)}P_{\Ran
M_2(x)}\|.\end{equation} (Note that this is not true with the
classical definition when the two ranges have a non-trivial
intersection, but with the definition here it works.) Moreover, it
is easy to see that $$P_{\Ran M_j}=M_j(M_j^*M_j)^{-1}M_j^*,$$ where
$x$ has been omitted for readability. Combining this with
(\ref{eq5}) and the fact that singular values are
$C^\infty$-functions of the matrix entries, it is clear that
$\sigma$ is $C^{p-1}$ near $0$, as desired.
\end{proof}


\section{Properties of the projection operators}
Let $A$ $\M_1$, $\M_2$, $\phi_1$ etc. be as before, i.e., $C^{p}$-manifolds with a non-tangential intersection point $A$. We assume that $p\geq 1$ and continue to use the convention of denoting objects related to the
$C^p$-manifold $\M=\M_1\cap\M_2$ without subindex. In Propositions
\ref{p1} and \ref{p2} the quantities $s_2$ and $s_3$ appears. These are not necessarily the same and they depend on an auxiliary constant
$\epsilon_2$ originating from Proposition \ref{p1}. In this section we let $\epsilon_2$ be a fixed number (which we will determine later), and we let $s_4$ denote the minimum of all possible $s$'s from Section
\ref{s1} related to the 3 manifolds. The above will not be repeated
in the statements of the results below. Thus, we can apply any result
from Section \ref{s1} to either of the manifolds considered here.
Moreover, letting $j$ denote either $1,2$ or nothing, we let $r_4>0$
be such that
\begin{equation*}\M_j\cap \B_{\K}(A,s_4)=\phi_j(\B(0,r_4))\cap
\B_{\K}(A,s_4),\end{equation*} and such that the results of Section
\ref{s1} applies to each $\phi_j(x)$ with $\|x\|<r_4$. We also
assume that $\epsilon_2<1$.

\begin{figure}[tbh!]\label{fig5}
\unitlength=1in
\begin{center}
\psfrag{M1M2}[][l]{$\M_1 \cap \M_2$} \psfrag{piB}[][l]{$\pi(B)$}
\psfrag{rho1}[][l]{$\rho_1(B)$} \psfrag{pi1}[][l]{$\pi_1(B)$}
\psfrag{Bpos}[][l]{$B$} \psfrag{E}[][l]{$\e$} \psfrag{F}[][l]{$\f$}
\psfrag{D}[][l]{$\d$} \psfrag{M1}[][l]{$\M_1$}
\psfrag{r2}[][l]{$s_4$} \psfrag{F}[][l]{$\f$}
\includegraphics[width=3.5in]{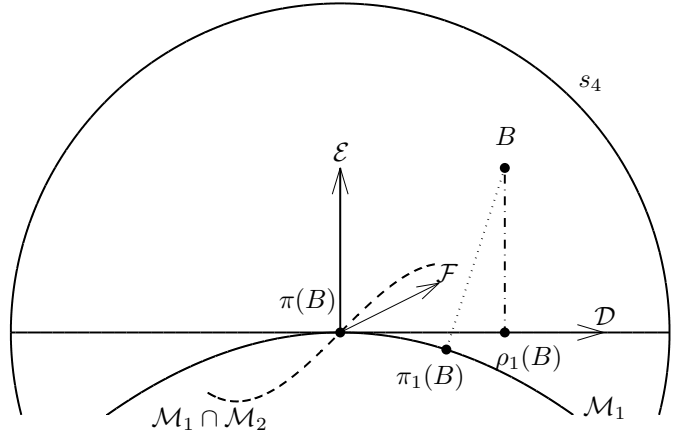}
\end{center}
\caption{Illustration of the difference between $\rho_1$ and
$\pi_1$. $\d$ and $\e$ appear in the proof of Proposition \ref{p4}.}
\end{figure}

Given an affine linear manifold $\n\subset\K$, we denote by $P_\n$
the orthogonal projection onto $\n$. We introduce maps
$\rho_j:\B(A,s_4)\rightarrow\K$, ( $j=1$ or $j=2$), via
$$\rho_j(B)=P_{\tilde{T}_{\M_j}(\pi(B))}(B).$$
Thus, $\rho_j$ resemble $\pi_j$ but is slightly different. $\pi_j$
projects onto $\M_j$ whereas $\rho_j$ projects onto the tangent
plane of $\M_j$ taken at the closest point to $B$ in $\M_1\cap\M_2$,
i.e. $\pi(B)$, (see Fig \ref{fig5}). A proper estimate of the
difference is given in Proposition \ref{p4}.

\begin{lemma}\label{l1}
The operators $\rho_1$ and $\rho_2$ are $C^{p-1}$-maps in $\B_{\K}(A,s_4)$.
Moreover, we can select a number $s_5<s_4$ such that the image of
$\B(A,s_5)$ under $\rho_1,\rho_2,\pi,\pi_1,\pi_2$, as well as any
composition of two of those maps, is contained in $\B(A,s_4)$.
\end{lemma}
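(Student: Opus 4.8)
The plan is to first rewrite $\rho_j$ in a form in which the regularity is transparent, and then extract $s_5$ from a routine continuity argument. Since $\tilde T_{\M_j}(C)=C+T_{\M_j}(C)$, orthogonal projection of $B$ onto this affine space gives
$$\rho_j(B)=\pi(B)+P_{T_{\M_j}(\pi(B))}\big(B-\pi(B)\big).$$
By Proposition \ref{p2} the map $\pi$ is $C^{p-1}$ on $\B(A,s_4)$ (the standing choice of $s_4$ and $r_4$ guarantees that $\pi(B)$ stays in the $\phi_j$-chart domain, so $\rho_j$ is well defined there), and $B\mapsto B$ is smooth. Hence it suffices to prove that the matrix-valued map $B\mapsto P_{T_{\M_j}(\pi(B))}$ is $C^{p-1}$ on $\B(A,s_4)$.

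To this end I would identify the $\phi_j$-chart coordinate of the point $\pi(B)$. Applying Proposition \ref{p2} to $\M_j$ produces the $C^{p-1}$ map $B'\mapsto x_{B'}$ sending $B'$ to the chart coordinate of $\pi_j(B')$, namely $\theta\circ(\sigma_j)^{-1}$ from that proof. For $B'=\pi(B)\in\M\subset\M_j$ one has $\pi_j(\pi(B))=\pi(B)$, so the chart coordinate of $\pi(B)$ equals $(\theta\circ\sigma_j^{-1})(\pi(B))$, a composition of $C^{p-1}$ maps and thus $C^{p-1}$ in $B$. Writing $M_j(B)$ for $d\phi_j$ evaluated at this coordinate, $M_j$ is $C^{p-1}$ (since $d\phi_j$ is $C^{p-1}$) and has full column rank by Definition \ref{def man}, so $T_{\M_j}(\pi(B))=\Ran M_j(B)$ and
$$P_{T_{\M_j}(\pi(B))}=M_j(B)\big(M_j(B)^*M_j(B)\big)^{-1}M_j(B)^*.$$
As matrix inversion is real-analytic where defined and $M_j^*M_j$ is invertible, the right-hand side is $C^{p-1}$ in $B$; substituting into the formula for $\rho_j$ shows $\rho_1,\rho_2\in C^{p-1}$. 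I expect this identification of $T_{\M_j}(\pi(B))$ as a smoothly varying projection to be the only genuine obstacle, and the loss of one derivative to be intrinsic, inherited from $\pi$ and $d\phi_j$.

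For the second claim, note that each of the five maps is continuous on $\B(A,s_4)$ and fixes $A$: indeed $\pi(A)=A$ and $\pi_j(A)=A$ because $A$ lies on the respective manifold, while $\rho_j(A)=P_{\tilde T_{\M_j}(A)}(A)=A$ since $A\in\tilde T_{\M_j}(A)$. I would then argue in two stages. First, by continuity at $A$ there is $t$ with $0<t\le s_4$ such that $g(\B(A,t))\subset\B(A,s_4)$ for every $g\in\{\rho_1,\rho_2,\pi,\pi_1,\pi_2\}$ (take the minimum of the five individual radii). Second, again by continuity, choose $s_5$ with $0<s_5<t$ such that $h(\B(A,s_5))\subset\B(A,t)$ for every such $h$. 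Then for a single map, $g(\B(A,s_5))\subset g(\B(A,t))\subset\B(A,s_4)$, while for a two-fold composition $g\circ h$ any $B\in\B(A,s_5)$ satisfies $h(B)\in\B(A,t)$ and hence $g(h(B))\in\B(A,s_4)$, as required. This part is entirely routine.
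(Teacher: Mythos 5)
Your proposal is correct and follows essentially the same route as the paper: the same decomposition $\rho_j(B)=\pi(B)+P_{T_{\M_j}(\pi(B))}(B-\pi(B))$ with $P_{\Ran M_j}=M_j(M_j^*M_j)^{-1}M_j^*$ for $M_j=d\phi_j$ at the chart coordinate of $\pi(B)$, plus a routine continuity argument for $s_5$. You merely spell out two points the paper leaves implicit, namely that the chart coordinate of $\pi(B)$ depends $C^{p-1}$-smoothly on $B$ via Proposition \ref{p2}, and the two-stage choice of radii handling compositions.
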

\begin{proof}
The second part is an immediate consequence of the continuity of the
maps. Let $j$ denote $1$ or $2$ and set $M_j(B)=d\phi_j(\pi(B))$.
Note that $M_j$ is a $C^{p-1}$-map in $\B_{\K}(A,s_4)$ by
Proposition \ref{p2} and the choice of $s_4$. Moreover,
\begin{align*}&\rho_j(B)=P_{\tilde{T}_{\M_j}(\pi(B))}(B)=\pi(B)+P_{{T}_{\M_j}(\pi(B))}(B-\pi(B))=\\&=\pi(B)+M_j(B)\big(M_j^*(B)M_j(B)\big)^{-1}M_j^*(B)\big(B-\pi(B)\big),
\end{align*}
from which the result follows.
\end{proof}


\begin{proposition}\label{p4}
Given any $B\in\B(A,s_5)$ and $j=1$ or $j=2$, we have
$$\|\pi_j(B)-\rho_j(B)\|< 5\sqrt{\epsilon_2}\|B-\pi(B)\|.$$
\end{proposition}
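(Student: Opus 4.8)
The goal is to control the distance between the true metric projection $\pi_j(B)$ onto the curved manifold $\M_j$ and the linearized projection $\rho_j(B)$ onto the affine tangent plane $\tilde{T}_{\M_j}(\pi(B))$, both taken at a point $B$ near the non-tangential intersection point $A$. The plan is to set $C=\pi(B)$, so that $C\in\M_1\cap\M_2\subset\M_j$ is the common base point for both constructions, and then use Proposition \ref{p1} as the basic quantitative tool: part $(i)$ says that $\M_j$ stays within $\epsilon_2\|\cdot-C\|$ of its affine tangent plane $\tilde{T}_{\M_j}(C)$, and part $(ii)$ says the tangent plane stays within $\epsilon_2\|\cdot-C\|$ of $\M_j$. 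The whole estimate should come out as a constant times $\sqrt{\epsilon_2}$ rather than $\epsilon_2$, and understanding where the square root enters is the crux.

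First I would estimate $\|\rho_j(B)-C\|$. Since $\rho_j(B)$ is the orthogonal projection of $B$ onto the affine plane through $C$, we have $\|\rho_j(B)-C\|\le\|B-C\|=\|B-\pi(B)\|$; call this quantity $d$. Next I would bound $\|\pi_j(B)-C\|$ similarly: because $C\in\M_j$ is a competitor for the closest point to $B$ in $\M_j$, we get $\|B-\pi_j(B)\|\le\|B-C\|=d$, and hence $\|\pi_j(B)-C\|\le\|\pi_j(B)-B\|+\|B-C\|\le 2d$ by the triangle inequality. Thus both points lie within a ball of radius comparable to $d$ around $C$, which keeps them inside $\B(A,s_4)$ by Lemma \ref{l1} and lets me apply Proposition \ref{p1} to each. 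The strategy is then to show that both $\pi_j(B)$ and $\rho_j(B)$ are \emph{nearly equal to the orthogonal projection of $B$ onto the affine plane} $\tilde{T}_{\M_j}(C)$, and then invoke the triangle inequality.

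The main step, and the expected obstacle, is relating $\pi_j(B)$ — defined by the nonlinear orthogonality condition $B-\pi_j(B)\perp T_{\M_j}(\pi_j(B))$ — to $\rho_j(B)$, which is defined by the \emph{linear} orthogonality $B-\rho_j(B)\perp T_{\M_j}(C)$. The difficulty is that $\pi_j(B)$ uses the tangent space at the moving point $\pi_j(B)$, whereas $\rho_j$ uses the fixed tangent space at $C$. I would proceed as follows: by Proposition \ref{p1}$(i)$, $\dist(\pi_j(B),\tilde{T}_{\M_j}(C))\le\epsilon_2\|\pi_j(B)-C\|\le 2\epsilon_2 d$, so $\pi_j(B)$ is within $2\epsilon_2 d$ of the affine plane. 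Now $\rho_j(B)=P_{\tilde{T}_{\M_j}(C)}(B)$ is the nearest point of that plane to $B$, so to compare $\pi_j(B)$ with $\rho_j(B)$ along the plane I must bound the \emph{tangential} discrepancy, not just the normal one. Here the square root appears: a standard Pythagorean estimate shows that if a point is nearly stationary for the distance-to-$\M_j$ functional but sits at normal-distance of order $\epsilon_2 d$ from the tangent plane, the in-plane displacement from the plane's foot point $\rho_j(B)$ can only be bounded by order $\sqrt{\epsilon_2}\,d$, because the quadratic nature of the manifold near $C$ converts a first-order normal error into a half-order tangential error. Concretely, writing $\pi_j(B)=\rho_j(B)+(\text{tangential})+(\text{normal})$ and using that $\|B-\pi_j(B)\|^2\le\|B-\rho_j(B)\|^2$ fails to hold in the wrong direction, one balances the quadratic terms to extract the $\sqrt{\epsilon_2}$ factor.

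Finally I would collect the contributions. The normal part is $O(\epsilon_2 d)$ by Proposition \ref{p1}$(i)$, and the tangential part is $O(\sqrt{\epsilon_2}\,d)$ by the balancing argument; since $\epsilon_2<1$ we have $\epsilon_2\le\sqrt{\epsilon_2}$, so both are absorbed into a single $\sqrt{\epsilon_2}$ term. Assembling the constants — roughly a factor of $2$ from the $\|\pi_j(B)-C\|\le 2d$ bound, plus the constants from the two applications of Proposition \ref{p1} and the Pythagorean balancing — yields a total coefficient below $5$, giving $\|\pi_j(B)-\rho_j(B)\|<5\sqrt{\epsilon_2}\,\|B-\pi(B)\|$ as claimed. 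The value $5$ is not sharp and is simply a convenient bound that survives the coarse triangle-inequality accounting; the essential content is that the error is $O(\sqrt{\epsilon_2})$ relative to $d=\|B-\pi(B)\|$, which is exactly what is needed when $\epsilon_2$ is chosen small later in the argument.
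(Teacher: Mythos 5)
Your overall plan coincides with the paper's: set $C=\pi(B)$, use Proposition \ref{p1} in both directions to compare $\pi_j(B)$ and $\rho_j(B)$ through the affine tangent plane $\tilde{T}_{\M_j}(C)$, and accept an $O(\sqrt{\epsilon_2})$ loss. But the one step that actually produces the $\sqrt{\epsilon_2}$ is missing, and the concrete estimate you do supply in its place is too weak. You bound $\|B-\pi_j(B)\|\le\|B-C\|=d$ using $C$ as the competitor. That confines $\pi_j(B)$ to the ball of radius $d$ about $B$, and intersecting that ball with the cone $\dist(\,\cdot\,,\tilde{T}_{\M_j}(C))\le\epsilon_2\|\cdot-C\|$ from Proposition \ref{p1}($i$) still leaves a region of diameter comparable to $d$: for instance, writing $B=D_B+E_B$ with $D_B=\rho_j(B)\in T_{\M_j}(C)$ and $E_B\perp T_{\M_j}(C)$ (taking $C=0$), the point $2D_B$ lies in the tangent plane at distance exactly $d$ from $B$ but at distance $\|D_B\|$ from $\rho_j(B)$, which can be as large as almost $d$. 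So no $\sqrt{\epsilon_2}$ gain follows from these two constraints alone, and the sentence about ``balancing the quadratic terms'' is a heuristic, not a derivation.

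The missing ingredient is a sharper competitor: apply Proposition \ref{p1}($ii$) at the tangent-plane point $\rho_j(B)=D_B$ itself to produce a point of $\M_j$ within $\epsilon_2\|D_B\|$ of $D_B$, whence
$\|B-\pi_j(B)\|\le\|E_B\|+\epsilon_2\|D_B\|$. This confines $\pi_j(B)$ to a ball about $B$ whose radius is essentially the \emph{normal} component $\|E_B\|$, not $d$. Intersecting that small sphere with the thin cone (or the slab $\|E\|\le\epsilon_2((1+\epsilon_2)\|D_B\|+\|E_B\|)$ that contains it) is what forces the tangential displacement of $\pi_j(B)$ from $D_B$ to be small: the angular width of the intersection, seen from $B$, is $\cos^{-1}(1-O(\epsilon_2))=O(\sqrt{\epsilon_2})$ by the chord--sagitta estimate $\cos^{-1}(1-x)\le\pi\sqrt{x/2}$. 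That is the true source of the square root (a sphere meeting a thin slab, not curvature of $\M_j$ per se), and optimizing the resulting expression over $t=\|D_B\|/\|E_B\|$ is what yields the constant $5$. Without replacing your competitor bound by this one, the proof does not close.
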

\begin{proof}
By Lemma \ref{l1} we have that Proposition \ref{p1} applies to the
point $C=\pi(B)$. It is no restriction to assume that $\pi(B)=0$,
which we now do. Denote $\d=T_{\M_j}(0)$, $\e=\Span\{B-\rho_j(B)\}$
and $\f=\K\ominus(\d\oplus\e)$, (see Fig. \ref{fig5}). Let $D_B$ and
$E_B$ be elements of $\d$ and $\e$ such that $B=D_B+E_B$, and note
that
$$\rho_j(B)=D_B.$$ We thus have to show that
\begin{equation}\label{hy0}\|\pi_j(B)-D_B\|<
5\sqrt{\epsilon_2}\|B\|.\end{equation} First note that by
Proposition \ref{p1} ($ii$) there exists a point in
$\M_1\cap\M_2\cap\B(D_B,\epsilon_2\|D_B\|)$. Thus
$$\|B-\pi_j(B)\|\leq \|B-D_B\|+\epsilon_2\|D_B\|=\|E_B\|+\epsilon_2\|D_B\|$$
and hence $\pi_j(B)$ is a member of the set
\begin{equation}\label{hy1}\Big\{(D,E,F):\|D-D_B\|^2+\|E-E_B\|^2+\|F\|^2<(\|E_B\|+\epsilon_2\|D_B\|)^2\Big\}.\end{equation}
However, by Proposition \ref{p1} ($i$) we have that $\pi_j(B)$ also
is a member of
\begin{equation}\label{hy2}\Big\{(D,E,F):\|E\|^2+\|F\|^2<\epsilon_2^2\|D\|^2\Big\}.\end{equation}
The left hand side of (\ref{hy0}) is thus dominated by the supremum
of the function $$f(D,E,F)=\|D-D_B\|^2+\|E\|^2+\|F\|^2,$$ subject to
the conditions in (\ref{hy1}) and (\ref{hy2}). Either by geometrical
considerations or the method of Lagrange multipliers, it is not hard
to deduce that this supremum is attained for $F=0$ and $D,E$ of the
form $D=dD_B$ and $E=eE_B$ where $d,e\in\R$. We now have a
two-dimensional problem of circles and cones, and in the remainder
of the proof we treat $D$ and $E$ as elements of $\R^2$. Given $D+E$
in the intersection of (\ref{hy1}) and (\ref{hy2}), it is easily
seen (see Fig. \ref{fig6}) that
$$\|E\|\leq \epsilon_2(\|D_B\|+\|E_B\|+\epsilon_2\|D_B\|),$$ since
this is the height of the cone at the outer edge of the circle. The
problem becomes simpler if we replace the cone in (\ref{hy2}) by the
following strip:
\begin{equation}\label{hy3}\Big\{E:\|E\|< \epsilon_2((1+\epsilon_2)\|D_B\|+\|E_B\|)\Big\}.\end{equation}
\begin{figure}[tbh!]\label{fig6}
\unitlength=1in
\begin{center}
\psfrag{alpha}[][l]{$\alpha$} \psfrag{EB}[][l]{$E_B$}
\psfrag{DB}[][l]{$D_B$} \psfrag{M1}[][l]{$\M_1$}
\psfrag{EBn}[][l]{${\tiny{\|E_B\|+\epsilon_2\|D_B\| }}$}
\psfrag{Eexp}[][l]{${\tiny{\epsilon_2((1+\epsilon_2)\|D_B\|+\|E_B\|)}}$}
\includegraphics[width=4.5in]{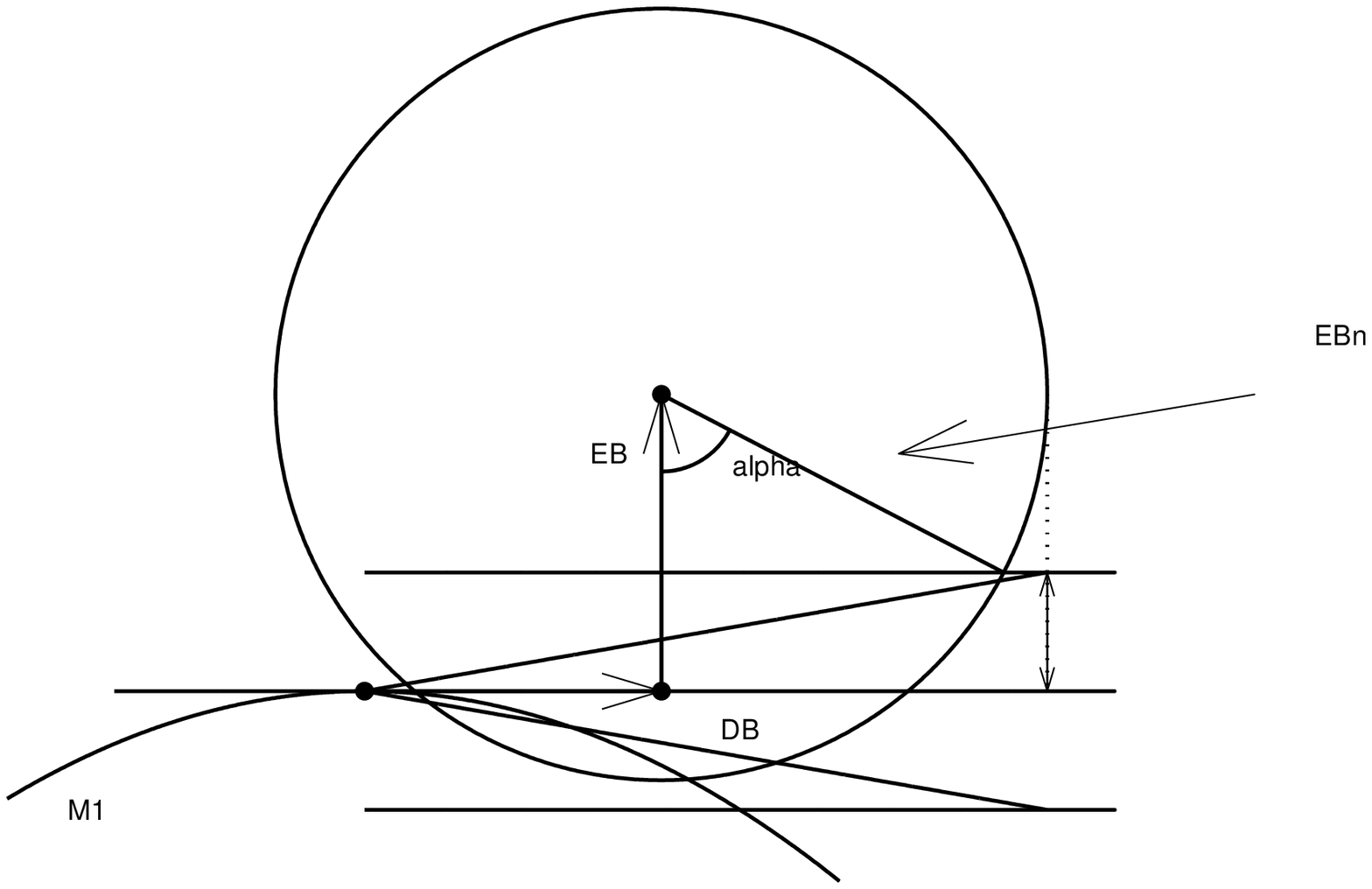}
\end{center}
\caption{Proof illustration.}
\end{figure}
From Figure \ref{fig6}, and some freshman
formulas, it is readily verified that the sought supremum is dominated by $\alpha(\|E_B\|+\epsilon_2\|D_B\|),$ where $\alpha$ is the angle
given by
$$\alpha=\cos^{-1}\left(\frac{\|E_B\|-\epsilon_2((1+\epsilon_2)\|D_B\|+\|E_B\|)}{(\|E_B\|+\epsilon_2\|D_B\|)}\right).$$
Since we have assumed that $\epsilon_2<1$, the proposition thus
follows if we establish that
$$\cos^{-1}\left(\frac{\|E_B\|-\epsilon_2(2\|D_B\|+\|E_B\|)}{(\|E_B\|+\epsilon_2\|D_B\|)}\right)\frac{\|E_B\|+\epsilon_2\|D_B\|}{\sqrt{\|E_B\|^2+\|D_B\|^2}}<5\sqrt{\epsilon_2}$$
By dividing with $\|E_B\|$ at suitable places, one sees that this is
equivalent to
$$\sup_{t>0}\left\{\cos^{-1}\left(\frac{1-2\epsilon_2t-\epsilon_2}{1+\epsilon_2 t}\right)\frac{1+\epsilon_2 t}{\sqrt{1+t^2}}\right\}<5\sqrt{\epsilon_2}$$
Using $\cos^{-1}(1-x)\leq \pi\sqrt{x/2}$ and $\epsilon_2<1$ we
obtain
\begin{align*}
&\cos^{-1}\left(\frac{1-2\epsilon_2t-\epsilon_2}{1+\epsilon_2
t}\right)\frac{1+\epsilon_2
t}{\sqrt{1+t^2}}=\cos^{-1}\left(1-\frac{3\epsilon_2t+\epsilon_2}{1+\epsilon_2
t}\right)\frac{1+\epsilon_2 t}{\sqrt{1+t^2}}\leq
\\& \leq \frac{\pi}{\sqrt{2}}\sqrt{\frac{3\epsilon_2t+\epsilon_2}{1+\epsilon_2
t}}\frac{1+\epsilon_2 t}{\sqrt{1+t^2}}\leq\sqrt{\epsilon_2}
\frac{\pi}{\sqrt{2}}\frac{\sqrt{{(3t+1)}{(1+t)}}}{\sqrt{1+t^2}}<
5\sqrt{\epsilon_2}
\end{align*}
since the function on the right clearly is bounded, and a few
calculations show that the maximum is attained at $t=(1+\sqrt{5})/2$
which gives $5\sqrt{\epsilon_2}$ as an upper bound.
\end{proof}


\begin{lemma}\label{l3}
Given $B\in\B(A,s_5)$ we have
$$\pi(B)=\pi(\rho_1(B))=\pi(\rho_2(\rho_1(B))).$$
\end{lemma}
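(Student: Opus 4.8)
The plan is to reduce both equalities to the orthogonality characterization of $\pi$ supplied by Proposition \ref{p2}: for $C\in\M$ inside the relevant ball, $C=\pi(B)$ precisely when $B-C\perp T_\M(C)$. Write $C=\pi(B)$. By the choice of $s_5$ together with Lemma \ref{l1}, every point in sight --- namely $B$, $\rho_1(B)$, $\rho_2(\rho_1(B))$ and $C$ --- lies in $\B(A,s_4)$, so this characterization is available for $\M$ as well as for $\M_1$ and $\M_2$. Since $C=\pi(B)$ we already know $B-C\perp T_\M(C)$, and the whole task becomes showing that the displacements $\rho_1(B)-C$ and $\rho_2(\rho_1(B))-C$ remain perpendicular to $T_\M(C)$.

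The single genuinely linear-algebraic ingredient I would isolate is this. Because $\M=\M_1\cap\M_2\subseteq\M_j$ and both are manifolds near $C$, any curve in $\M$ through $C$ is also a curve in $\M_j$, so $T_\M(C)\subseteq T_{\M_j}(C)$ for $j=1,2$. I claim that for such a nested inclusion, whenever $v\perp T_\M(C)$ one also has $P_{T_{\M_j}(C)}v\perp T_\M(C)$. Indeed, for $w\in T_\M(C)$ we have $w\in T_{\M_j}(C)$, so self-adjointness of the orthogonal projection gives $\langle P_{T_{\M_j}(C)}v,w\rangle=\langle v,P_{T_{\M_j}(C)}w\rangle=\langle v,w\rangle=0$. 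This small fact is the crux of the lemma.

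With it in hand the two equalities follow mechanically. For the first, the definition of $\rho_1$ and $\pi(B)=C$ give $\rho_1(B)=C+P_{T_{\M_1}(C)}(B-C)$, hence $\rho_1(B)-C=P_{T_{\M_1}(C)}(B-C)$. Applying the claim with $v=B-C$ and $j=1$ shows $\rho_1(B)-C\perp T_\M(C)$, so Proposition \ref{p2} yields $\pi(\rho_1(B))=C=\pi(B)$. For the second, put $B'=\rho_1(B)$; we have just shown $\pi(B')=C$, so $B'-C\perp T_\M(C)$ and $\rho_2(B')-C=P_{T_{\M_2}(C)}(B'-C)$. Applying the claim once more with $v=B'-C$ and $j=2$ gives $\rho_2(B')-C\perp T_\M(C)$, whence $\pi(\rho_2(\rho_1(B)))=C=\pi(B)$.

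The main obstacle is conceptual rather than computational: one must recognize that these nested projections do not move the $\pi$-image, the reason being precisely that projecting onto the larger space $T_{\M_j}(C)$ cannot introduce a component along the smaller, nested subspace $T_\M(C)$ to which the displacement was already orthogonal. It is worth noting that non-tangentiality plays no role in the argument --- only that $C$ is regular, so that $T_\M(C)\subseteq T_{\M_j}(C)$ is meaningful --- but I would retain the standing hypotheses of the section for uniformity.
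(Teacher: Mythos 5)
Your proof is correct and follows essentially the same route as the paper's: both rest on the orthogonality characterization of $\pi$ from Proposition \ref{p2} together with the inclusion $T_{\M_1\cap\M_2}(C)\subseteq T_{\M_j}(C)$, your identity $\rho_1(B)-C=P_{T_{\M_1}(C)}(B-C)$ being the same decomposition the paper writes as $(\rho_1(B)-B)+(B-\pi(B))$ with each summand perpendicular to $T_{\M_1\cap\M_2}(C)$. The second equality is handled identically in both arguments, by applying the first to $\rho_1(B)$ with the roles of the two manifolds reversed.
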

\begin{proof}
If we prove the first equality the second follows by reversing the
roles of 1 and 2 and applying the first equality to $\rho_1(B)$. To
see the first equality, note that $B-\rho_1(B)\perp
T_{\M_1}(\pi(B))$ and obviously $B-\pi(B)\perp T_{\M_1\cap
M_2}(\pi(B))$ so
$$\rho_1(B)-\pi(B)=(\rho_1(B)-B)+(B-\pi(B))\perp T_{\M_1\cap
M_2}(\pi(B)),$$ which by Lemma \ref{l1} and Proposition \ref{p2}
implies $\pi(B)=\pi(\rho_1(B))$, as desired.
\end{proof}

\begin{lemma}\label{l2}
Let $c_5>1$ and $\epsilon_5>0$ be given. If $E,F\in\K$ satisfies
$\|E\|>c_5\|F\|$ and $\|E-F\|<\epsilon_5$, then
$$\|E\|<\epsilon_5\frac{c_5}{c_5-1}.$$
\end{lemma}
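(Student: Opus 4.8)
The plan is to extract $\|E\|$ from the two hypotheses by means of the reverse triangle inequality, which converts the smallness of $\|E-F\|$ into an upper bound on $\|E\|-\|F\|$. First I would record that
$$\|E\|-\|F\|\leq \|E-F\|<\epsilon_5.$$
Next, I would rearrange the hypothesis $\|E\|>c_5\|F\|$ into $\|F\|<\|E\|/c_5$ (legitimate since $c_5>0$), which yields
$$\|E\|-\|F\|>\|E\|-\frac{\|E\|}{c_5}=\frac{c_5-1}{c_5}\,\|E\|.$$
Chaining the two displays gives $\frac{c_5-1}{c_5}\|E\|<\epsilon_5$, and because $c_5>1$ the factor $(c_5-1)/c_5$ is strictly positive, so dividing through produces $\|E\|<\epsilon_5\,\frac{c_5}{c_5-1}$, exactly the assertion.

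There is essentially no obstacle here; the only point that merits a moment of care is the bookkeeping of strict versus non-strict inequalities. The reverse triangle inequality $\|E\|-\|F\|\leq\|E-F\|$ is non-strict, but it is composed with the strict bound $\|E-F\|<\epsilon_5$, so the combined estimate $\|E\|-\|F\|<\epsilon_5$ is already strict; similarly the strictness of $\|E\|>c_5\|F\|$ propagates through to give the strict final inequality. No appeal to any earlier result in the paper is required—this is a purely elementary two-line estimate valid in any normed space, here $\K$.
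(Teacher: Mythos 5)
Your argument is correct and is essentially the same elementary estimate as the paper's, which combines the hypothesis $\|E\|>c_5\|F\|$ with the reverse triangle inequality. The only cosmetic difference is that the paper first disposes of the case $\|E\|<\epsilon_5$ and then manipulates the ratio $\|E\|/\|F\|$, whereas your direct subtraction $\|E\|-\|F\|>\frac{c_5-1}{c_5}\|E\|$ avoids both the case split and any division by $\|F\|$; your bookkeeping of strict inequalities is also fine.
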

\begin{proof} If $\|E\|<\epsilon_5$ we are done, otherwise
$$c_5<\frac{\|E\|}{\|F\|}<\frac{\|E\|}{\|E\|-\epsilon_5}$$
which easily gives the desired estimate.
\end{proof}

The next result will be the main tool for proving convergence of the
alternating projections.
\begin{theorem}\label{t1}
Let $A\in\M_1\cap\M_2$ be a non-tangential point and assume that
$\M_1$ and $\M_2$ are $C^2$-manifolds. Then for each $c_6>\sigma(A)$
and each $\epsilon_6>0$ there exists a positive $s_6<s_5$ such that
for all $B\in\M_2\cap\B(A,s_6)$ we have
\begin{itemize}
\item[($i$)] $\|\pi_1(B)-\pi(B)\|<c_6\|B-\pi(B)\|$
\item[($ii$)] $\|\pi(\pi_1(B))-\pi(B)\|< \epsilon_6 \|B-\pi(B)\|$
\end{itemize}
Moreover the same holds true with the roles of $\M_1$ and $\M_2$
reversed.
\end{theorem}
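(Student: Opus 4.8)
The plan is to treat the two conclusions separately, reducing each to the already-established estimates for the tangential surrogate $\rho_1$. Throughout I take $B\in\M_2\cap\B(A,s_6)$ with $s_6<s_5$, and, exactly as in the proof of Proposition \ref{p4}, it is no restriction to put $\pi(B)=0$. The one structural fact I need is that $\pi(B)$ is the nearest point of $\M=\M_1\cap\M_2$, so by Proposition \ref{p2} the vector $B-\pi(B)=B$ is orthogonal to $T_\M(0)$. Write $V=T_\M(0)$, $V_j=T_{\M_j}(0)$ and $W_j=V_j\ominus V$. Non-tangentiality enters through (\ref{eq angle1}), giving $V_1\cap V_2=V$, and through (\ref{eq777}), giving $\|P_{W_1}P_{W_2}\|=\sigma(\pi(B))$.

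For $(i)$ I would first estimate $\rho_1(B)$ and then correct to $\pi_1(B)$ via Proposition \ref{p4}. Since $B\in\M_2$ and $\pi(B)=0\in\M_2$, Proposition \ref{p1}$(i)$ applied to $\M_2$ gives $\dist(B,V_2)\le\epsilon_2\|B\|$; because $B\perp V$ forces $P_{V_2}B=P_{W_2}B$, this reads $\|B-P_{W_2}B\|\le\epsilon_2\|B\|$. The same orthogonality gives $\rho_1(B)=P_{V_1}B=P_{W_1}B$, so that
\begin{equation*}\|\rho_1(B)\|\le\|P_{W_1}P_{W_2}B\|+\|B-P_{W_2}B\|\le\big(\sigma(\pi(B))+\epsilon_2\big)\|B\|.\end{equation*}
Combining this with Proposition \ref{p4} and the triangle inequality yields
\begin{equation*}\|\pi_1(B)-\pi(B)\|\le\|\pi_1(B)-\rho_1(B)\|+\|\rho_1(B)\|\le\big(5\sqrt{\epsilon_2}+\epsilon_2+\sigma(\pi(B))\big)\|B-\pi(B)\|.\end{equation*}
Since $\sigma$ is continuous by Proposition \ref{p3} and $\pi(B)\to A$ as $B\to A$, shrinking $s_6$ makes $\sigma(\pi(B))$ as close to $\sigma(A)$ as I please; fixing $\epsilon_2$ small enough that $5\sqrt{\epsilon_2}+\epsilon_2<c_6-\sigma(A)$ then secures $(i)$.

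For $(ii)$ I would use Lemma \ref{l3}, which gives $\pi(\rho_1(B))=\pi(B)$, together with the fact that $\pi$ is a $C^1$-map on $\B(A,s_4)$ (here $p=2$) and hence Lipschitz, with some constant $L$, on a compact neighbourhood of $A$; both $\pi_1(B)$ and $\rho_1(B)$ lie in $\B(A,s_4)$ by Lemma \ref{l1}. Then
\begin{equation*}\|\pi(\pi_1(B))-\pi(B)\|=\|\pi(\pi_1(B))-\pi(\rho_1(B))\|\le L\|\pi_1(B)-\rho_1(B)\|<5L\sqrt{\epsilon_2}\,\|B-\pi(B)\|,\end{equation*}
the last step being Proposition \ref{p4}, and $(ii)$ follows by additionally requiring $5L\sqrt{\epsilon_2}<\epsilon_6$. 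Taking the smaller of the two admissible values of $\epsilon_2$ handles $(i)$ and $(ii)$ at once, and the reversed statement is obtained verbatim after interchanging the subscripts $1$ and $2$.

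The triangle-inequality chains and the bookkeeping of constants are routine; the substantive step is the angle estimate in $(i)$. Its crux is that non-tangentiality, through the clean-intersection identity $V_1\cap V_2=V$, is precisely what allows me to discard the $V$-component (which $B\perp V$ annihilates) and to replace the full product $P_{V_1}P_{V_2}$ by $P_{W_1}P_{W_2}$, whose norm is the genuine angle $\sigma(\pi(B))$ rather than the classical angle of the tangent spaces. Absent non-tangentiality, both the reduction $\rho_1(B)=P_{W_1}B$ and the control by $\sigma$ collapse, so this is the one place where the whole argument hinges on the hypothesis.
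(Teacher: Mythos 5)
Your proof is correct, and for part ($i$) it takes a genuinely different route from the paper's. The paper introduces \emph{two} tangential surrogates, $B'=\rho_2(B)$ and $D'=\rho_1(B')$, so that $B'$ lies exactly in $T_{\M_2}(0)\ominus T_{\M}(0)$ and $D'$ in $T_{\M_1}(0)\ominus T_{\M}(0)$; the angle then enters as the ratio $\|D'\|/\|B'\|\le\sigma(0)$, and the passage back from $(B',D')$ to $(B,\pi_1(B))$ is handled multiplicatively via Lemma \ref{l2} and the Lipschitz constant $c_3$ of $\rho_1$, producing the bound $c_2(1+5\sqrt{\epsilon_2})c_1<c_6$. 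You instead keep $B$ itself, split it as $P_{W_2}B+(B-P_{W_2}B)$ using Proposition \ref{p1}($i$) to control the second summand by $\epsilon_2\|B\|$, and invoke the angle as the operator norm $\|P_{W_1}P_{W_2}\|=\sigma(\pi(B))$ from (\ref{eq777}); this yields the additive bound $\sigma(\pi(B))+\epsilon_2+5\sqrt{\epsilon_2}$, dispenses with Lemma \ref{l2} and with the Lipschitz constant of $\rho_1$ entirely, and is easier to tune. Your part ($ii$) is likewise a streamlined version of the paper's: you compare $\pi(\pi_1(B))$ with $\pi(\rho_1(B))$ directly via Lemma \ref{l3} and Proposition \ref{p4}, rather than with $\pi(\rho_1(\rho_2(B)))$, saving one application of the Lipschitz estimate. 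The only point you gloss over, which the paper flags explicitly, is the apparent circularity in fixing $\epsilon_2$ last: the radii $s_4,s_5$ (and hence your Lipschitz constant $L$ for $\pi$) nominally depend on $\epsilon_2$. This is harmless --- as in the paper, one first fixes $L$ and the neighbourhood with $\epsilon_2=1$ and then shrinks, noting that a Lipschitz constant valid on a ball remains valid on any smaller ball --- but a complete write-up should say so.
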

\begin{proof}
Fix $c_1$ such that $\sigma(A)<c_1<c_6$ and pick an $s_7<s_5$ such
that
\begin{equation}\label{ghdf}\sup\{\sigma(C):C\in\M_1\cap\M_2\cap\B(A,s_7)\}<c_1,\end{equation} which we can
do since $\sigma$ is continuous by Proposition \ref{p3}. Let $c_2>1$
be such that $c_2 c_1<{c_6}.$ By Lemma \ref{l1} and Proposition
\ref{p2}, $\rho_1$ and $\pi$ are $C^1$-functions, and hence we can
pick $c_3>0$ such that
\begin{equation}\label{eq57}\|\rho_j(B)-\rho_j(B')\|\leq c_3\|B-B'\|\text{ and
}\|\pi(B)-\pi(B')\|\leq c_3\|B-B'\|\end{equation} for all
$B,B'\in\B(A,s_4)$, (recall that $s_4$ was chosen in the beginning
of this section). Finally, we fix $\epsilon_2$ such that
\begin{equation}\label{gf} 5\sqrt{\epsilon_2}(c_3+c_3^2)<\epsilon_6 \text{ and }
5\sqrt{\epsilon_2}(1+c_3)\frac{c_2}{c_2-1}<c_6\text{ and
}(1+5\sqrt{\epsilon_2})c_2c_1<c_6.\end{equation} This may seem like
a circle argument, because $c_1,c_2$ and $c_3$ depends on $c_6$
which depends on $\epsilon_2$ via $s_5$, and the $c_j$'s appears in
(\ref{gf}). However, this is easily circumvented by first choosing
$s_5$ with $\epsilon_2=1$, say, and pick values of $c_1$, $c_2$,
$c_3$. Then, once the real $\epsilon_2$ has been chosen via
(\ref{gf}) we can redefine $s_4,~s_5$ and $s_7$ accordingly without
violating (\ref{ghdf}) or (\ref{eq57}).

\begin{figure}[ht]\label{fig12}
\centering \psfrag{OOO}[c]{0} \psfrag{AAA}[c]{$A$}
\psfrag{BBp}[c]{$B'$} \psfrag{BB}[c]{$B$} \psfrag{DDp}[c]{$D'$}
\psfrag{DD}[c]{$D$} \psfrag{M1}[c]{$T_{\mathcal{M}_1}(0)$}
\psfrag{M2}[c]{$\mathcal{M}_2$} \psfrag{M1M2}[c]{$\mathcal{M}_1 \cap
\mathcal{M}_2$} \psfrag{TM1}[c]{$\mathcal{M}_1$}
\psfrag{TM2}[c]{$T_{\mathcal{M}_2}(0)$}
\includegraphics[width=.5\linewidth]{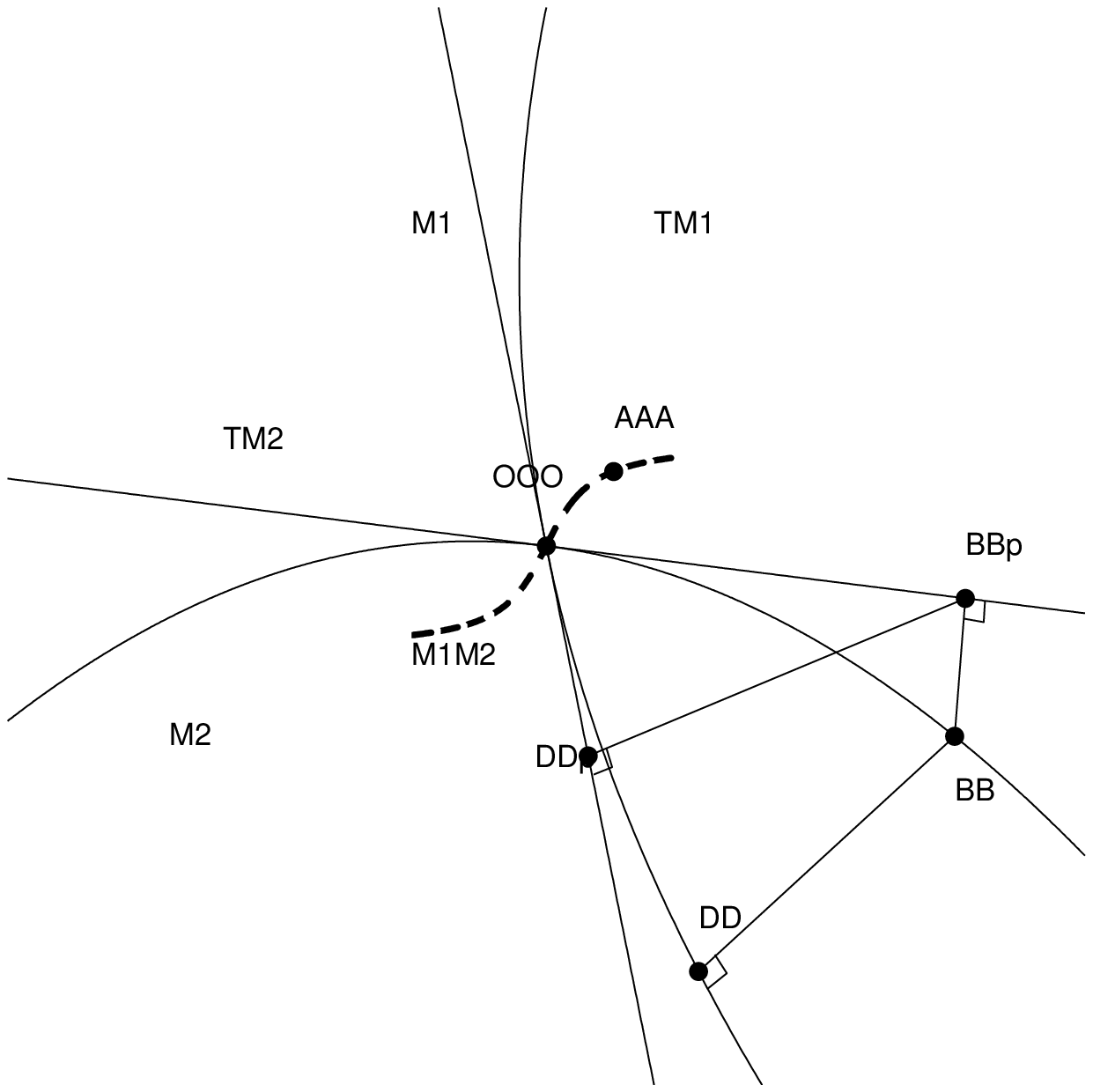}
\end{figure}

Now, pick an $s_6$ such that
$$\pi(\B(A,s_6))\subset\B(A,s_7),$$ and let $B\in\M_2\cap\B(A,s_6)$.
We begin with proving $(ii)$. Denote $C=\pi(B)$ and $D=\pi_1(B)$ and
note that $C\in\M_1\cap\M_2\cap\B(A,s_7)$ so $\sigma(C)<c_6$. There
is no restriction to assume that $C=0$, which we do from now on. We
thus need to show that $\|\pi(D)\|<\epsilon_6\|B\|$. Put
$B'=\rho_2(B)$ and $D'=\rho_1(B')$. (See Figure \ref{fig12} and
recall the $\pi(B')=\pi(B)=0$ by Lemma \ref{l3}). First, note that by
Proposition \ref{p4}
\begin{equation}\label{apa}\|B-B'\|=\|\pi_2(B)-\rho_2(B)\|<5\sqrt{\epsilon_2}\|B\|\end{equation}
and moreover by (\ref{eq57}) and Proposition \ref{p4} we have that
\begin{equation}\label{ok}\begin{aligned} &\|D-D'\|=\|\rho_1(B')-\pi_1(B)\|\leq
\|\rho_1(B')-\rho_1(B)\|+\|\rho_1(B)-\pi_1(B)\|\leq\\&\leq
c_3\|B'-B\|+5\sqrt{\epsilon_2}\|B\|<5\sqrt{\epsilon_2}(1+c_3)\|B\|
\end{aligned}\end{equation}
By Lemma \ref{l3} we have $0=\pi(B)=\pi(D')$ so part ($ii$) follows
by (\ref{eq57}), (\ref{gf}), (\ref{ok}) and the calculation
$$\|\pi(D)\|=\|\pi(D)-\pi(D')\|< c_3(5\sqrt{\epsilon_2}(1+c_3)\|B\|)<\epsilon_6 \|B\|.$$
We turn to part ($i$). Clearly $B'\in T_{\M_2}(0)$ and by Lemma
\ref{l3} we also have $D'\in T_{\M_1}(0).$ Thus
\begin{equation}\label{po}\frac{\|D'\|}{\|B'\|}\leq\sigma(0)<c_1\end{equation} whereas ($i$) amounts
to showing that $\|D\|/\|B\|<c_6$. Recall (\ref{ok}) and apply Lemma
\ref{l2} with $E=D$, $F=D'$,
$\epsilon_5=5\sqrt{\epsilon_2}(1+c_3)\|B\|$ and $c_5=c_2$. We see
that either
\begin{equation}\label{gj} \frac{\|D\|}{\|D'\|}\leq c_2
\end{equation}
or $\|D\|< 5\sqrt{\epsilon_2}(1+c_3)\frac{c_2}{c_2-1}\|B\|$, in
which case we are done since the constant is less than $c_6$ by
(\ref{gf}). We thus assume that (\ref{gj}) holds. Note that
\begin{equation}\label{po1} \frac{\|B'\|}{\|B\|}\leq 1+5\sqrt{\epsilon_2}
\end{equation}
by (\ref{apa}). Combining (\ref{po}), (\ref{gj}) and (\ref{po1}) we
get
$$\frac{\|D\|}{\|B\|}=\frac{\|D\|}{\|D'\|}\frac{\|B'\|}{\|B\|}\frac{\|D'\|}{\|B'\|}<c_2(1+5\sqrt{\epsilon_2})c_1<c_6$$
by (\ref{gf}).
\end{proof}

\section{Alternating projections}\label{ap}
We are finally ready for the main theorem. The third conclusion
below was not mentioned in the introduction. It basically says that
if the angle between $\M_1$ and $\M_2$ is not too close to 0, the
sequence of alternating projections $B_0,~B_1,~B_2,\ldots$ will
converge within machine precision within a fairly low number of
iterations. In the terminology of \cite{check}, $(B_k)_{k=0}^\infty$
converges ``R-linearly'' with rate less than $c$. For example, if
$\sigma(A)=1/2$, then we will hit single precision ($\approx
10^{-7.5}$) at $B_{24}$, since single precision has 24 bits of
significant precision.

\begin{theorem}\label{main}
Let $\M_1\cap\M_2$ be locally non-tangential $C^2$-manifolds around
$A\in\M_1\cap\M_2$, and let $\epsilon>0$ and $1>c>\sigma(A)$ be
given. Then there exists an $s>0$ such that the sequence of
alternating projections
\begin{equation*}B_0=\pi_1(B),~B_1=\pi_2(\pi_1(B)),~B_2=\pi_1(\pi_2(\pi_1(B))),~B_3=\pi_2(\pi_1(\pi_2(\pi_1(B)))),\ldots\end{equation*}~
\begin{itemize}
\item[($i$)] converges to a point $B_\infty\in\M_1\cap\M_2$\\
\item[($ii$)] $\|B_\infty-\pi(B)\|<\epsilon\|B-\pi(B)\|$\\
\item[($iii$)] $\|B_\infty-B_k\|<c^k\|B-\pi(B)\|$
\end{itemize}
\end{theorem}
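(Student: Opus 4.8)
The plan is to iterate Theorem~\ref{t1}, which provides the one-step contraction estimate. The key observation is that Theorem~\ref{t1} applies to points lying on $\M_2$ (and with roles reversed, to points on $\M_1$), and gives two crucial facts: that one projection step contracts the distance to $\M$ by a factor essentially $c$, and that the closest point $\pi(\cdot)$ on $\M_1\cap\M_2$ barely moves under a projection step. Since each $B_k$ in the alternating sequence lies on $\M_1$ or $\M_2$ (alternately), I can apply Theorem~\ref{t1} at each step, provided I first arrange that the entire tail of the sequence stays inside the ball $\B(A,s_6)$ where the theorem is valid.

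First I would choose the constant $c_6$ in Theorem~\ref{t1} to lie strictly between $\sigma(A)$ and the given $c$, and choose $\epsilon_6$ small (to be pinned down). Applying Theorem~\ref{t1}($i$) repeatedly gives the geometric decay
\begin{equation*}
\|B_{k}-\pi(B_k)\|< c_6\,\|B_{k-1}-\pi(B_{k-1})\|< c_6^{k}\,\|B_0-\pi(B_0)\|,
\end{equation*}
so that $\dist(B_k,\M)\to 0$ geometrically. To control the actual displacement $\|B_{k+1}-B_k\|$, I would use the triangle inequality together with the fact that $B_{k+1}=\pi_j(B_k)$, bounding $\|B_{k+1}-B_k\|$ by $\|B_{k+1}-\pi(B_k)\|+\|B_k-\pi(B_k)\|$, each of which is geometrically small by part~($i$). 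This yields $\sum_k\|B_{k+1}-B_k\|<\infty$, so $(B_k)$ is Cauchy and converges to some $B_\infty$; since $\dist(B_k,\M)\to 0$ and $\M$ is closed, $B_\infty\in\M_1\cap\M_2$, giving~($i$). Conclusion~($iii$) follows by summing the geometric tail $\sum_{j\ge k}\|B_{j+1}-B_j\|$ and absorbing the constant into the base of the exponential (replacing $c_6$ by the slightly larger $c$).

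For conclusion~($ii$), the idea is that $\pi(B_k)$ converges to $B_\infty$ as well, while each step moves $\pi(\cdot)$ by at most $\epsilon_6\,\|B_{k-1}-\pi(B_{k-1})\|$ by Theorem~\ref{t1}($ii$). Telescoping,
\begin{equation*}
\|B_\infty-\pi(B_0)\|\le\sum_{k}\|\pi(B_{k+1})-\pi(B_k)\|+\|B_\infty-\lim_k\pi(B_k)\|,
\end{equation*}
and each summand is bounded by $\epsilon_6\,c_6^{k}\|B_0-\pi(B_0)\|$, so the total is at most a fixed multiple of $\epsilon_6\|B_0-\pi(B_0)\|$. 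Choosing $\epsilon_6$ small enough relative to $\epsilon$ then gives the desired bound, after relating $\|B_0-\pi(B_0)\|$ back to $\|B-\pi(B)\|$ (noting $B_0=\pi_1(B)$ and $\pi(B_0)=\pi(B)$, with $\|B_0-\pi(B)\|\le\|B-\pi(B)\|$ since $B_0$ is the projection of $B$ onto $\M_1$).

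The main obstacle is the bookkeeping needed to guarantee that \emph{all} iterates remain within $\B(A,s_6)$ so that Theorem~\ref{t1} may be applied at every step; this is a standard but delicate ``staying-in-the-ball'' argument. Concretely, I would choose the starting radius $s$ so small that the geometric sum of the step sizes, which is controlled by $\frac{1}{1-c_6}\dist(B,\M)$, keeps the sequence inside $\B(A,s_6)$ — this requires an a~priori bound that itself depends on the convergence one is trying to prove, so it must be set up as an induction: assuming $B_0,\dots,B_k\in\B(A,s_6)$, one derives the contraction and the step-size bound, and checks that $B_{k+1}$ still lies in the ball. Once this inductive invariant is in place, the three conclusions follow by the summation arguments sketched above.
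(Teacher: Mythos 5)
Your proposal follows essentially the same route as the paper's proof: iterate Theorem \ref{t1} to obtain geometric decay of $\|B_k-\pi(B_k)\|$ together with summability of $\|\pi(B_{k+1})-\pi(B_k)\|$, deduce convergence and the error bound $(ii)$ by telescoping with $\epsilon_6$ chosen of order $(1-c)\epsilon$, and close the loop with an induction keeping every iterate inside $\B(A,s_6)$ (the paper gets the Cauchy property from the sequence $(\pi(B_k))$ plus $\|B_k-\pi(B_k)\|\to 0$ rather than from the step sizes $\|B_{k+1}-B_k\|$, but this is an immaterial variation). One small slip: your claim $\|B_0-\pi(B)\|\le\|B-\pi(B)\|$ ``since $B_0$ is the projection of $B$ onto $\M_1$'' tacitly uses nonexpansiveness of $\pi_1$, which holds for projections onto convex sets but not onto a general manifold $\M_1$; the paper instead uses $\|\pi_1(B)-B\|\le\|\pi(B)-B\|$ (valid because $\pi(B)\in\M_1$) and the triangle inequality to get $\|B_0-\pi(B_0)\|\le 2\|B-\pi(B)\|$, and this extra factor $2$ is harmless since it is absorbed into the choice of $\epsilon_6$.
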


\begin{proof}
Let $c_6$ and $\epsilon_6$ in Theorem \ref{t1} be given by $c_6=c$
and
\begin{equation}\label{apgf}\epsilon_6=({1-c})\epsilon/2.\end{equation}
Let $s_6$ be given by Theorem \ref{t1} and pick
\begin{equation}\label{kiu}s<\frac{s_6(1-\epsilon)}{4(2+\epsilon)}\end{equation} such that
$\pi(\B(A,s))\subset\B(A,s_6/4)$, (recall that
$\epsilon\leq\epsilon_2<1$, by assumption). The latter condition
ensures that
\begin{equation}\label{oi} \|\pi(B)-A\|<s_6/4.\end{equation} Let
$l=\|B-\pi(B)\|$ and note that
\begin{equation}\label{eq654}l\leq\|B-A\|+\|A-\pi(B)\|\leq s+s_6/4.\end{equation} First note that
$\|B_0-B\|=\|\pi_{1}(B)-B\|\leq \|\pi(B)-B\|= l$ and that
$\pi(B)=\pi(B_0)$ by Lemma \ref{l3}, so
$$\|B_0-\pi(B_0)\|\leq \|B_0-B\|+\|B-\pi(B)\|\leq 2l.$$ Applying
Theorem \ref{t1} we get
$$\|B_{k+1}-\pi(B_{k+1})\|\leq\|B_{k+1}-\pi(B_k)\|\leq  c\|B_{k}-\pi(B_k)\|,$$
as long as
\begin{equation}\label{afg}B_k\in\B(A,s_6).\end{equation} Assuming
this for the moment we get
\begin{equation}\label{agt} \|B_{k}-\pi(B_{k})\|\leq
2lc^k\end{equation} and (Theorem \ref{t1} and Lemma \ref{l3})
\begin{equation}\label{agt1} \|\pi(B_{k+1})-\pi(B_{k})\|\leq
\epsilon_6(2lc^k).\end{equation} The sequence
$(\pi(B_k))_{k=1}^\infty$ is thus a Cauchy sequence, and hence
converges to some point $B_\infty$. By (\ref{agt}) the sequence
$(B_k)_{k=1}^\infty$ must also converge, and the limit point is
again $B_\infty$, which thus satisfies $B_{\infty}=\pi(B_\infty)$
since $\pi$ is continuous. By the triangle inequality, the fact
$\pi(B)=\pi(B_0)$, (\ref{apgf}) and (\ref{agt1}) we have
$$\|\pi(B_k)-\pi(B)\|<\frac{2\epsilon_6 l}{1-c}<\epsilon l,$$ and
combining this with (\ref{oi}), (\ref{eq654}), (\ref{agt}) we also
have \begin{align*}&\|A-B_k\|\leq
\|A-\pi(B)\|+\|\pi(B)-\pi(B_k)\|+\|\pi(B_k)-B_k\|<s_6/4+\epsilon
l+2l\leq \\&\leq s_6/4+\epsilon
(s+s_6/4)+2(s+s_6/4)<s_6,\end{align*} where the last inequality
follows by (\ref{kiu}). With these estimates at hand, it is easy to
turn the above argument into a proper induction proof in which
(\ref{afg}) is verified at each step. We omit the details.
\end{proof}

\section{Rank $k$ matrices versus Hankel matrices; numerical examples}\label{numexp}
We now continue the example in Section \ref{I} and the application
to approximation of a signal by sums of $k$ exponential functions,
as outlined towards the end. By Theorem \ref{t3}, an arbitrary
intersection point of $\r_k$ and $\H$ is almost surely
non-tangential. Given the signal $f$ we let $g$ be the closest ``sum
of $k$ exponentials'' to $f$ in the $\|\cdot\|_w$-norm, see
(\ref{eq53}). Theorem \ref{main} thus says that if $\|f-g\|_w$ is
not too large, the sequence of alternating projections will converge
to an $H(g_\infty)$, where $g_{\infty}\approx g$ is a sum of $k$
exponentials. More precisely, let $$\K=\left\{\sum_{j=1}^k
c_j(\alpha_j^l)_{l=0}^{2n-2}:~c_j\in\C,~\alpha_j\in\C\right\}\subset
\C^{2n-1}.$$ Then for every $\epsilon>0$ there exists an $s>0$ such
that if $\dist(f,\K)<s$, then
$$\|g_\infty-g\|_w<\epsilon ~\dist(f,\K).$$

We now discuss what happens if $f$ is not close enough to $g$ that
Theorem \ref{main} can be applied for any $\epsilon$. Since both
$\pi_{\H}$ and $\pi_{\r_k}$ are contractions, the sequence of
alternating projections $(B_j)_{j=0}^\infty$ (with $B_0=H(f)$) will
be bounded. Thus it has a convergent subsequence, and the limit
point is easily seen to be in $\H_k$. It is not hard to deduce that
$\dist(B_j,\H_k)\rightarrow 0$ as $j\rightarrow\infty$. The only way
the whole sequence could avoid converging is thus if it switches
endlessly along the valleys of the thin set of tangential points,
thereby avoiding the surrounding hills made by the open sets
$\cup_{A\in \H_k}\B(A,s_A)$, where $s_A>0$ is given by Theorem
\ref{main} (with $\epsilon=1$ say) for all non-tangential $A$'s, and
$s_A=0$ for the thin set of tangential $A$'s. That this could happen
seems highly unlikely to us, and we have certainly never encountered
it in practice. However, we leave it as an open problem to prove
that this can not occur.

For practical purposes, it is of course of interest that $s$ be a
relatively large number, (given some fixed $\epsilon$ and $c$).
Inspecting the proof Theorem \ref{main}, one gets the impression
that $B(A,s)$ will not be distinguishable even with binoculars. To
test the actual relationship between $s$ and $\epsilon$ in Theorem
\ref{main}, (with $c=1$), we conducted a few experiments which we
now present. A function\footnote{In this section we will sloppily
say function when we talk of its sampling, i.e. a sequence.} $g$ is
written as a sum of $10$ exponential functions, and then ${f}$ is
generated by adding a smaller function $n$ to $g$. $g$ is normalized
such that $\|g\|_{w}=1$, (recall (\ref{eq53})), $n$ is chosen such
that $H(n)$ is orthogonal to $\H_{10}^n$ at $Hg$, (with respect to
the Hilbert-Schmidt norm). By Proposition \ref{p2}, $H(g)$ is likely
very close to $\pi(H(f))$, and so we will estimate
$\dist(H(f),\H_{10})=\|H(f)-\pi(H(f))\|$ by
$\|H(f)-H(g)\|=\|n\|_{w}$ and $\|H(f_\infty)-\pi(H(f))\|$ by
$\|H(f_\infty)-H(g)\|$. Note that there is no explicit way to compute
$\pi(H(f))$. We set
$$\varepsilon(g,n)=\frac{\|H(f_\infty)-H(g)\|}{\|H(n)\|}=\frac{\|f_\infty-g\|_{w}}{\|n\|_{w}}.$$
Let $s$ be a parameter taking values $0.1, 0.01,~0.001,~0.0001$ and
$0.00001$. For each value of $s$ we randomly generate 50 normalized
$g$'s as above, and for each $g$ we randomly generate a $n$ with
$\|n\|_{w}=s$. The supremum of $\varepsilon(g,n)$ should give us an
idea of what value of $\epsilon$ allows for such an $s$ as in
Theorem \ref{main}. The results are demonstrated in Figure
\ref{figbrusvitt} and Figure \ref{figbursexp}. The difference between the two is that
Figure \ref{figbrusvitt} used white
noise, and in Figure \ref{figbursexp}, the ``noise'' consisted of sums of randomly generated
exponentials. Judging from these figures, $\epsilon\approx\sigma$
seems to be a good rule of thumb, although this will of course vary
substantially from one application to another. The method seems to work slightly better for the case of white noise.


\begin{figure}[tbh!]
\unitlength=1in
\begin{center}
\includegraphics[width=4.5in]{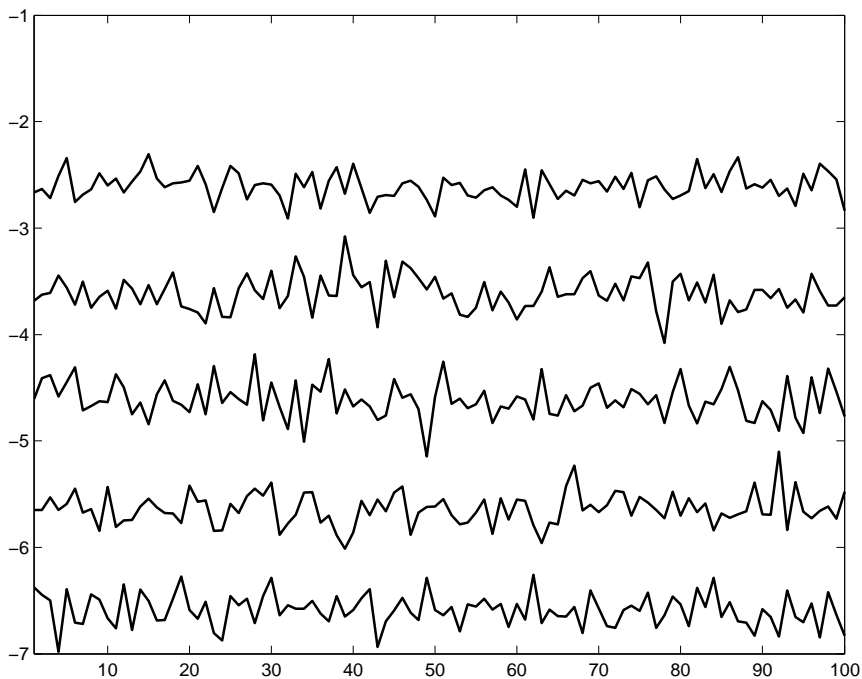}
\end{center}
\caption{\label{figbrusvitt} Signal with 10 exponentials and white noise.}
\end{figure}%

\begin{figure}[tbh!]
\unitlength=1in
\begin{center}
\includegraphics[width=4.5in]{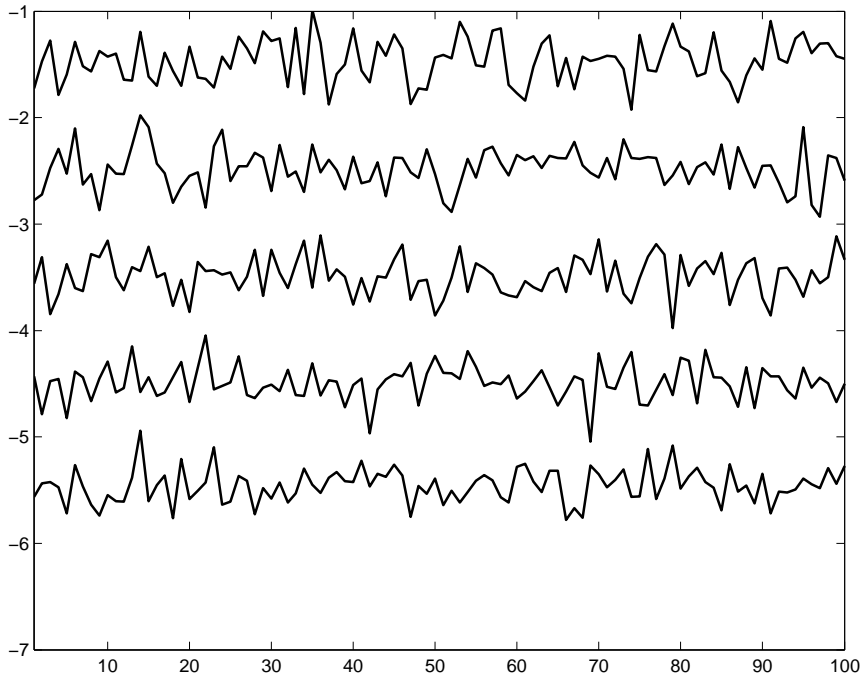}
\end{center}
\caption{\label{figbursexp} Signal with 10 exponentials and sums of other exponentials added.}
\end{figure}%

\section{Summary and open problems}
We have developed a theoretical framework that enables us to
understand convergence properties of alternating projection schemes. This provides substantially
stronger results than previously available, e.g., for theorems
relying solely on Zangwill's Global Convergence Theorem, which
neither provide convergence rate estimates nor information about how
far away the computed approximation is from the optimal value.
Moreover, in contrast to the theory developed for the case of
transversal manifolds, our framework provides more information under
very non-restrictive requirements on the manifolds.

We end by noting a few open problems. We first discuss smoothness.
By examples similar to Example \ref{ex2}, it is not hard to see that
one needs the manifolds to be at least $C^1$ for the alternating
projections to converge. By inspection of the proofs, one easily
sees that for Theorem \ref{main} to hold, it suffices with
$C^1_{Lip}$, i.e. that $d\phi$ is Lipschitz continuous. It is thus an
open question whether Theorem \ref{main} is true assuming only $C^1$. Next we discuss non-tangentiality. Again, Example \ref{ex2} can be tailored such that the alternating projections does not converge, despite assuming $C^\infty$, if we allow the manifolds to be tangential. However, for all applications we are aware of, the manifolds are algebraic, and it seems quite unlikely that a similar thing could happen in this case. We conjecture that for algebraic manifolds, the sequence of alternating projections will always converge, without any additional assumptions. Recall Example \ref{ex3}, where the sequence clearly converges albeit \textit{extremely slowly}. For practical purposes, non-tangentiality is thus still vital.

\section{Acknowledgements}
This work was supported by the Swedish Research Council and the Swedish Foundation for International Cooperation in Research and Higher Education.

\textit{Acknowledgement: This work was partially conducted at Lund
University, supported by the Swedish Research Council; Grant
2008-23883-61232-34}

\bibliographystyle{amsplain}
\bibliography{alternating_projections_bibliography}

\end{document}